\def\th@remark{%
  \thm@headfont{\bfseries}%
  \normalfont 
  \thm@preskip\topsep \divide\thm@preskip\tw@
  \thm@postskip\thm@preskip
}
\makeatletter \newcommand{\myitem}[2][]{%
\item[#2#1]\protected@edef\@currentlabel{#2}%
}
\DeclareSymbolFont{extraitalic} {U}{zavm}{m}{it}
\DeclareMathSymbol{\stigma}{\mathord}{extraitalic}{168}
\newtheorem*{proposition*}{Proposition}
\newtheorem*{theorem*}{Theorem}
\newtheorem{theorem}{Theorem}[subsection]
\newtheorem{proposition}[theorem]{Proposition}
\newtheorem{lemma}[theorem]{Lemma}
\newtheorem{corollary}[theorem]{Corollary}
\theoremstyle{definition}
\newtheorem{definition}[theorem]{Definition}
\theoremstyle{definition}
\theoremstyle{remark}
\newtheorem{remark}[theorem]{Remark}
\newcommand*\leftdash{\rotatebox[origin=c]{-45}{$\dabar@\dabar@\dabar@$}}
\newcommand*\rightdash{\rotatebox[origin=c]{45}{$\dabar@\dabar@\dabar@$}}
\newcommand*\mondash[1]{\rotatebox[origin=c]{45}{$\dabar@\dabar@\dabar@$}{\kern-1.5ex\raisebox{-.5ex}{}_{#1}}\kern.5ex}
\newcommand{\He}{\mathcal{H}}
\newcommand{\Se}{\mathbb{S}}
\newcommand{\aff}{\mathrm{aff}}
\newcommand{\tr}{\mathrm{tr}}
\titleformat{\subsection}[runin]
{\normalfont\bfseries}{\thesubsection}{1em}{}
\titleformat{\subsubsection}[runin]
{\normalfont\bfseries}{\thesubsection}{1em}{}
\newcommand{\Addresses}{{
  \bigskip
  \footnotesize
  K.~Tolmachov, \textsc{Department of Mathematics, University of Hamburg,
    Hamburg, Germany}\par\nopagebreak
  \textit{E-mail address}: \texttt{tolmak@khtos.com}

  \medskip

   H.~Zhylinskyi, \textsc{Department of Mathematics, Jagiellonian University, Krakow, Poland}\par\nopagebreak
  \textit{E-mail address}: \texttt{georgyzhilinsky144@gmail.com}
}}
\begin{document} \title{Generalized Markov traces and Jucys-Murphy elements}

\author{Kostiantyn Tolmachov, Heorhii Zhylinskyi} \date{}
\maketitle \abstract{We give a simple construction of Markov traces for Iwahori-Hecke algebras associated with infinite series of crystallographic Coxeter groups. In types $B$ and $D$ it is new, and generalizes a known construction in type $A$ employing symmetric polynomials in multiplicative Jucys-Murphy elements.}

\section{Introduction.}
A two-variable link invariant -- the HOMFLY-PT polynomial -- can be recovered using a family of trace functions on Iwahori-Hecke algebras in type $A$ satisfying analogues of the conditions in Markov's theorem for the braid group (usually called ``Markov moves'') \cite{jonesHeckeAlgebraRepresentations1987}. The analogues of Markov moves can be formulated for Iwahori-Hecke algebras of other Coxeter groups, leading to a general notion of the Markov trace, see, for example, \cite{geckTraceFunctionsIwahoriHecke1998}, \cite{geckMarkovTracesKnot2004}, \cite{gomi2006markov},  \cite{websterGeometryMarkovTraces2009}. The Markov traces where classified in type $A$, where there is, up to some change of scaling, a unique Markov trace, by Ocneanu and Jones \cite{jonesHeckeAlgebraRepresentations1987}, and in types $B$ and $D$ by Geck and Lambropoulou \cite{geckMarkovTracesKnot2004}, \cite{geckTraceFunctionsIwahoriHecke1998}. In type $B$, Markov traces produce the invariants of the solid torus links \cite{lambropoulou1994solid} \cite{geckMarkovTracesKnot2004}.

Of particular interest are specializations of the general Markov traces in types $B, D$ coming from geometry (or, alternatively, from the theory of Soergel bimodules). These where studied in \cite{gomi2006markov} using the Lusztig's Fourier transform, in \cite{websterGeometryMarkovTraces2009} using the equivariant Hecke category, and in \cite{bezrukavnikovMonodromicModelKhovanov2022} using the monodromic Hecke category. Note that the latter two papers work in the setting of an arbitrary Weyl group and allow to categorify Markov traces to obtain a triply-graded link invariant known as Khovanov-Rozansky homology. Its earlier construction, using Soergel bimodules in type $A$ \cite{khovanovTriplygradedLinkHomology2007}, readily generalizes to an arbitrary Coxeter group.

Iwahori-Hecke algebras in types $A, B, D$ contain families of commuting operators whose properties are analogous to those of Jucys-Murphy elements in the group algebra of the symmetric group. 
Under a standard normalization, the Markov trace in type $A$ is polynomial in variable $a$. Its individual coefficients turn out to be ``represented'' (in the sense explained in the main text) by the elementary symmetric polynomials in multiplicative Jucys-Murphy elements. It was shown in \cite{bezrukavnikovMonodromicModelKhovanov2022} that this fact admits a categorification to the level of Khovanov-Rozansky homology. Moreover, in type $A$, multiplicative Jucys-Murphy elements themselves admit an interesting categorification, defining relative Serre functors with respect to the embedding of a smaller Hecke category into a larger one \cite{gorskySerreDualityKhovanov2019}, \cite{ho2025relative}.  Our investigation was motivated by a desire to see how far these patterns can be extended for other types.

In this paper, we give a new simple construction for Markov traces in types $B, D$ using the theory of Jucys-Murphy elements in Iwahori-Hecke algebras of these types.

The paper is organized as follows.
In Section \ref{sec:iwah-hecke-algebr-2} we collect the needed definitions about Coxeter groups and Iwahori-Hecke algebras. In Section \ref{sec:some-centr-elem} we define a family of commuting elements in Iwahori-Hecke algebras of type $A, B, D$ coming from the extended affine Hecke algebra in type $A$. See \cite{ram1997seminormal} for the history and an alternative definition. We follow \cite{Ram2003} in our treatment of these elements.

In Section \ref{sec:markov-traces-1} we first define the Markov traces and give, as a warm-up, a construction of a Markov trace defined by a uniform formula in all types $A, B, D$. See Theorem \ref{sec:markov-traces-via}. In Section \ref{sec:gener-mark-trac-3} we recall the results of Geck and Lambropoulou and give the formulas for the Markov traces in types $B$ and $D$ depending on a free parameter (Theorem \ref{sec:gener-mark-trac-2} and Corollary \ref{sec:gener-mark-trac-4}, respectively). In Section \ref{sec:geom-mark-trac} we specialize our results to obtain the geometric Markov traces.

\subsection{Acknowledgments.} The results of this paper were obtained while the authors participated in \href{https://math.mit.edu/research/highschool/primes/YuliasDream/}{Yulia's Dream} program. We would like to thank the program's or\-ga\-ni\-zers for this wonderful initiative. We would like to thank Mykhailo Barkulov, another participant of the program, for helpful discussions.

We have conducted numerical experiments using SageMath software \cite{sagemath} throughout the work on this paper, and would like to thank its developers.

K.T. was funded by the Deutsche Forschungsgemeinschaft SFB 1624 grant, Pro\-jekt\-nu\-mmer 506632645.

\section{Iwahori-Hecke algebras and multiplicative Jucys-Murphy elements.}
\label{sec:iwah-hecke-algebr-2}
\subsection{Coxeter groups.}
Let $(W, S), S = \{s_1, \dots, s_n\},$ be a Coxeter system. Thus, $W$ is a group with a presentation $(s_0, \dots, s_n | (s_is_j)^{m_{ij}} = e)$, where $m_{ij} \in \mathbb{Z}^+ \cup \{\infty\}$, with $m_{ii} = 1, m_{ji} = m_{ij} \geq 2$ if $i \neq j$, and $m_{ij} = \infty$ means that there is no relation included for the pair $s_i, s_j$. Such a presentation is read off the Coxeter diagram of $(W,S)$ in a standard way: such a diagram is a non-oriented graph with $S$ as a set of vertices, and $m_{ij} - 2$ edges between the vertices $s_i$ and $s_j$. Let $l: W \to \mathbb{Z}_{\geq 0}$ be the Bruhat length function with respect to $S$.

We will be primarily interested in Coxeter groups that are Weyl groups of classical types $A, B = C, D$. To fix the numeration of the generators, their Coxeter diagrams are presented below, with a vertex corresponding to $s_i \in S$ marked by $i$. The Coxeter diagram of type $A_n$ is \[\dynkin[arrows=false, edge length=0.6cm, backwards, labels={n,n-1,2,1}]A{},\] of type $B_n = C_n$ is
\[
  \dynkin[arrows=false, edge length=0.6cm, backwards, labels={n-1,,,1,0}]B{},
\]
and of type $D_n$ is 
\[
\dynkin[arrows=false, edge length=0.6cm, backwards, labels={n-1,,,2,1,1'}]D{}.
\]
When $n$ is small, these are interpreted in a standard way as follows: the Coxeter system $A_1 = B_1 = D_1$ has a single vertex, $D_2 = A_1 \times A_1$, $D_3 = A_3$. We also adopt the convention $A_0 = B_0 = D_0$ corresponding to the empty Coxeter graph and the trivial Coxeter group.

Many of the results and definitions in this paper are valid in types all types $A, B, D$, with definitions for type $A_{n-1}$ naturally fitting with definitions in types $B_n, D_n$. In this case, we will write ``type $X_n$, where $X$ is $A_{-1}$, $B$ or $D$'' (or something in a similar vein).  

\subsection{Iwahori-Hecke algebras.}
Let $\mathcal{A} = \mathbb{Q}(v)$. The Iwahori-Hecke algebra $\He_v(W,S) = \He(W,S)$ associated to a Coxeter system $(W,S)$ is a unital $\mathcal{A}$-algebra with generators $t_s, s \in S,$ subject to the relations
\begin{enumerate}[label=(\roman*).,ref=(\roman*)]
\item $\langle t_{s_i},t_{s_j}\rangle_{m_{ij}} = \langle t_{s_j},t_{s_i}\rangle_{m_{ij}}$, where $\langle x,y \rangle_{m}$ is an alternating product $xyx\dots$ of $x$ and $y$ with $m$ factors.
\item \label{item:1} $(t_s - v)(t_s + v^{-1}) = 0.$
\end{enumerate}
When $(W,S)$ is of type $B_n$ we will also be interested in the Iwahori-Hecke algebra $\He_{v,v_0}(W,S)$ with unequal parameters $v, v_0 \in \mathcal{A}$. This is a unital algebra over the ring $\mathcal{A}$, with the relation \ref{item:1} replaced by the relations
\begin{enumerate}[]
\item[(ii\textquotesingle).] $(t_{s_i} - v)(t_{s_i} + v^{-1}) = 0$ for $i > 1$,
\item[(ii\textquotesingle\textquotesingle).] $(t_{s_i} - v_0)(t_{s_i} + v_{0}^{-1}) = 0$.
\end{enumerate}

When $(W,S)$ is of type $X_n$ with $X \in \{A, B=C, D\}$, we will write $W(X_n)$ and $\He(X_n)$ for $W$ and $\He(W,S)$, or $\He_{v,v_0}(B_n)$ if we need to emphasize that we use unequal parameters. In each of these cases, we will write $t_i$ for $t_{s_i}$.

Let $w = s_{i_1}\dots s_{i_r}, w \in W, s_{i_k} \in S$ be a reduced expression. Then the element $t_w = t_{s_{i_1}}\dots t_{s_{i_r}}$ in $\He(W,S)$ is independent of the choice of a reduced expression, and $\He(W,S)$ is a free $\mathcal{A}$-module with a basis $t_w, w \in W$. The elements $t_w$ are invertible, and the collection $t_{w}^{-1}, w \in W,$ gives another basis of $\He(W,S)$ over $\mathcal{A}$. 

\subsection{Trace functional and a pairing on $\He(W,S)$.} Let $x \mapsto \overline{x}$ be the Kazhdan-Lusztig involution on $\He(W,S)$. It is a $\mathbb{Q}$-algebra involution, defined by the assignments $\overline{v} = v^{-1}, \overline{t_w} = t_{w^{-1}}^{-1}.$ Let $i:\He(W,S)$ stand for the $\mathcal{A}$-linear algebra anti-involution defined by $i(t_w) = t_{w^{-1}}$. 

For $h \in \He(W,S), x \in W,$ let $h_x$ stand for the $t_x^{-1}$-coefficient of $h$ in the basis $t_w^{-1},$ where $w$ runs through $W$. Let $\tau: \He(W,S) \to \mathcal{A}$ be a functional defined by  $\tau(h) = h_e,$ where $e \in W$ is the unit.  For $h, h' \in \He(W,S)$, define a pairing $\langle h, h' \rangle = \tau(i(\overline{h})h')$. We have the following standard proposition, see, for example, \cite[Chapter 8]{geck2000characters}.
\begin{proposition}[]
  \label{sec:trace-funct-pair}
  The functional $\tau$ and the pairing $\langle\, , \rangle$ satisfies the following properties.
  \begin{enumerate}[label=(\alph*)., ref=(\alph*)]
  \item The functional $\tau$ is an $\mathcal{A}$-linear trace on $\He(W,S),$ that is $\tau(hh') = \tau(h'h).$
  \item The pairing $\langle\, , \rangle$ is $\mathcal{A}$-antilinear in the first argument, $\mathcal{A}$-linear in the second argument and satisfies the orthogonality relation
    \(   \langle t_{w_1} , t^{-1}_{w_2} \rangle = \delta_{w_1, w_2^{-1}}.
    \)
   \item \label{item:3} We have \(\langle h_1h_2, h_3 \rangle = \langle h_1, h_3 i(\overline{h_2})\rangle = \langle h_2,  i(\overline{h_1})h_3\rangle\) for any $h_1, h_2, h_3 \in \He(W,S).$
  \end{enumerate}
\end{proposition}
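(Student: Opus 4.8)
The plan is to reduce the whole statement to a single non-formal input: the \emph{standard} symmetrizing trace $\epsilon$ on $\He(W,S)$, defined by $\epsilon(t_w) = \delta_{w,e}$ (the coefficient of $t_e$ in the standard basis $\{t_w\}$), is a trace, and the bases $\{t_w\}$ and $\{t_{w^{-1}}\}$ are dual with respect to it, i.e. $\epsilon(t_a t_b) = \delta_{ab,e}$. This is classical, and I would either cite \cite[Chapter 8]{geck2000characters} or reprove it by induction on length from the quadratic and braid relations. Everything else follows by formal manipulation of the two involutions $i$ and $\overline{\,\cdot\,}$.

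First I would record two identities. Since the bar involution is a $\mathbb{Q}$-algebra homomorphism with $\overline{t_w} = t_{w^{-1}}^{-1}$, it satisfies $\overline{t_w^{-1}} = t_{w^{-1}}$; expanding $h = \sum_w h_w t_w^{-1}$ and applying the bar involution gives $\overline{h} = \sum_w \overline{h_w}\, t_{w^{-1}}$, so the coefficient of $t_e$ in $\overline{h}$ equals $\overline{h_e}$. This yields the key relation $\tau(h) = \overline{\epsilon(\overline{h})}$. Second, because $i$ is an $\mathcal{A}$-linear anti-involution with $i(t_w) = t_{w^{-1}}$, a direct computation gives $i(\overline{t_w}) = i(t_{w^{-1}}^{-1}) = i(t_{w^{-1}})^{-1} = t_w^{-1}$.

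Part \ref{item:3}'s precursor, part (a), is then immediate: $\tau(hh') = \overline{\epsilon(\overline{h}\,\overline{h'})} = \overline{\epsilon(\overline{h'}\,\overline{h})} = \tau(h'h)$, using that $\epsilon$ is a trace and that bar is an algebra homomorphism, while $\mathcal{A}$-linearity of $\tau$ holds because it extracts a basis coefficient. For part (b), the linearity statements follow from the same coefficient description together with $\overline{cx} = \overline{c}\,\overline{x}$ for $c \in \mathcal{A}$, which produces the antilinearity in the first slot; the orthogonality relation is computed by combining the two recorded identities, $\langle t_{w_1}, t_{w_2}^{-1}\rangle = \tau(i(\overline{t_{w_1}})\, t_{w_2}^{-1}) = \tau(t_{w_1}^{-1} t_{w_2}^{-1}) = \overline{\epsilon(t_{w_1^{-1}} t_{w_2^{-1}})} = \delta_{w_1, w_2^{-1}}$. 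For part (c) I would use that $i$ reverses products and bar preserves them, so $\langle h_1 h_2, h_3\rangle = \tau(i(\overline{h_2})\, i(\overline{h_1})\, h_3)$; the expression $\langle h_2, i(\overline{h_1}) h_3\rangle$ then drops out directly, and $\langle h_1, h_3 i(\overline{h_2})\rangle$ after one application of the trace property from part (a) to move $i(\overline{h_2})$ cyclically.

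The only genuine obstacle is the classical duality $\epsilon(t_a t_b) = \delta_{ab,e}$ — equivalently, the fact that $\epsilon$ is a trace. Once this is granted, the entire proposition is bookkeeping with the involutions $i$ and $\overline{\,\cdot\,}$, and the relation $\tau(h) = \overline{\epsilon(\overline{h})}$ is what transports all the needed structure from $\epsilon$ to $\tau$.
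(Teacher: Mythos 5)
Your proof is correct, but note that the paper does not actually prove this proposition: it states it as standard and cites \cite[Chapter 8]{geck2000characters}, so there is no in-paper argument to match step for step; what you have done is reconstruct the content behind that citation. Your single non-formal input, $\epsilon(t_x t_y) = \delta_{xy,e}$ for $\epsilon(t_w) = \delta_{w,e}$, is precisely the symmetrizing-trace and dual-basis statement of that chapter, and it holds on the nose in the paper's balanced normalization $(t_s - v)(t_s + v^{-1}) = 0$ (in other normalizations one picks up factors like $q^{l(x)}$, so your implicit reliance on the balanced quadratic relation is load-bearing and worth stating). The genuinely valuable point in your write-up is the identity $\tau(h) = \overline{\epsilon(\overline{h})}$: the paper's $\tau$ extracts the coefficient of $t_e$ in the \emph{inverse} basis $\{t_w^{-1}\}$, and this is not equal to $\epsilon$ --- for instance $\tau(t_s) = v - v^{-1}$ while $\epsilon(t_s) = 0$, since $t_s = t_s^{-1} + (v-v^{-1})t_e$ --- so transporting the trace property and the duality through the bar involution is exactly the bookkeeping needed, and you do it correctly: $i(\overline{t_w}) = t_w^{-1}$ gives $\langle t_{w_1}, t_{w_2}^{-1}\rangle = \overline{\epsilon(t_{w_1^{-1}}t_{w_2^{-1}})} = \delta_{w_1,w_2^{-1}}$, and since $h \mapsto i(\overline{h})$ reverses products, $\langle h_1h_2, h_3\rangle = \tau(i(\overline{h_2})\,i(\overline{h_1})\,h_3)$, from which both equalities in part (c) follow, the second after one cyclic move using part (a). In short, your route supplies a self-contained derivation where the paper offers only a pointer to the literature; what it buys is that the entire proposition visibly rests on one classical fact plus formal manipulation of the two involutions, and it makes explicit the distinction between $\tau$ and the standard symmetrizing form $\epsilon$ that a hasty reading of the paper's definition could blur.
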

We will use the following straightforward corollary of this proposition repeatedly.
\begin{corollary}
  \label{sec:trace-funct-pair-1}
  Let $(W',S')$ be a parabolic subgroup of $W$. Assume that $x, y \in \He(W',S') \subset \He(W,S)$, and assume that $z \in \mathrm{span}_\mathcal{A}\langle t_w, w \in W\backslash W'\rangle$ (resp. $z \in \mathrm{span}_\mathcal{A}\langle t_w^{-1}, w \in W\backslash W'\rangle$). Then $\langle xz, y \rangle = \langle zx, y \rangle = 0$ (resp. $\langle x, yz \rangle = \langle x, zy \rangle =0$).
\end{corollary}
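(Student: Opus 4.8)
The plan is to reduce each of the four identities to the orthogonality relation of Proposition~\ref{sec:trace-funct-pair}(b) by using the adjunction formula~(c) to move the parabolic factor $x$ (or $y$) from one argument of the pairing to the other; once this is done, the two arguments become supported on $W'$ and on $W\backslash W'$ respectively, and orthogonality forces the pairing to vanish. Before starting I would record two structural facts. Since $W'$ is a subgroup it is closed under inversion, and since $\He(W',S')$ is generated by the $t_s$, $s\in S'$, together with the relation $t_s^{-1}=t_s-(v-v^{-1})$ coming from \ref{item:1}, it is stable under the anti-involution $i$ and the bar involution: indeed $i(t_w)=t_{w^{-1}}$ and $\overline{t_w}=t_{w^{-1}}^{-1}$ lie in $\He(W',S')$ whenever $w\in W'$. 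Consequently $i(\overline{x})\,y$ and $y\,i(\overline{x})$ again lie in $\He(W',S')$ and admit expansions in the basis $\{t_u^{-1}:u\in W'\}$.

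For the first pair of identities I would apply part~(c) directly: $\langle xz,y\rangle=\langle z,\,i(\overline{x})y\rangle$ and $\langle zx,y\rangle=\langle z,\,y\,i(\overline{x})\rangle$. Writing $z=\sum_{w\in W\backslash W'}d_w t_w$ in the $t$-basis and the second argument as $\sum_{u\in W'}c_u t_u^{-1}$, the antilinearity and linearity of the pairing together with $\langle t_w,t_u^{-1}\rangle=\delta_{w,u^{-1}}$ reduce each expression to a sum of terms $\overline{d_w}\,c_u\,\delta_{w,u^{-1}}$. Every such term vanishes, because $u\in W'$ forces $u^{-1}\in W'$ whereas $w\in W\backslash W'$, so the Kronecker delta is never satisfied. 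This is the genuine content of the corollary.

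For the second pair of identities, where $z\in\mathrm{span}_\mathcal{A}\{t_w^{-1}:w\in W\backslash W'\}$ sits in the right argument, I would first deduce a right-handed version of~(c), namely $\langle h_1,h_2h_3\rangle=\langle h_1\,i(\overline{h_3}),h_2\rangle=\langle i(\overline{h_2})h_1,h_3\rangle$, from the definition $\langle h,h'\rangle=\tau(i(\overline{h})h')$, the trace property of $\tau$, and the fact that $i$ and the bar involution commute. Applying these gives $\langle x,yz\rangle=\langle x\,i(\overline{z}),y\rangle$ and $\langle x,zy\rangle=\langle i(\overline{z})x,y\rangle$. Since $\overline{t_w^{-1}}=t_{w^{-1}}$ and $i(t_{w^{-1}})=t_w$, one checks that $i(\overline{z})=\sum_{w\in W\backslash W'}\overline{e_w}\,t_w$ lies in $\mathrm{span}_\mathcal{A}\{t_w:w\in W\backslash W'\}$ when $z=\sum e_w t_w^{-1}$; thus the second pair collapses exactly to the first pair already established.

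The main obstacle here is not mathematical depth but bookkeeping: one must track carefully which basis, $t_w$ or $t_w^{-1}$, each argument of the pairing is expanded in so that the orthogonality relation of the form $\langle t_w,t_u^{-1}\rangle=\delta_{w,u^{-1}}$ actually applies, and one must verify that $i$ and the bar involution carry the $t^{-1}$-span over $W\backslash W'$ precisely to the $t$-span over $W\backslash W'$ (and that $\He(W',S')$ is preserved). Once these compatibilities are pinned down, the only substantive input is the single observation that $W'$ is inverse-closed, which is what makes the relevant Kronecker deltas identically zero.
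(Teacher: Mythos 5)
Your proposal is correct and is essentially the argument the paper intends: the corollary is stated there without proof as a ``straightforward'' consequence of Proposition~\ref{sec:trace-funct-pair}, namely using part~\ref{item:3} to move the parabolic factor to the other argument and then invoking the orthogonality relation together with the fact that $W'$ is closed under inversion. Your auxiliary verifications --- that $\He(W',S')$ is stable under $i$ and the bar involution, that the right-handed adjunctions $\langle h_1, h_2h_3\rangle = \langle h_1\,i(\overline{h_3}), h_2\rangle = \langle i(\overline{h_2})h_1, h_3\rangle$ follow from the trace property since $(i\circ\overline{\phantom{x}})^2 = \mathrm{id}$, and that $i(\overline{\,\cdot\,})$ exchanges the $t_w^{-1}$-span and the $t_w$-span over $W\backslash W'$ --- all check out.
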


\subsection{``Full twist'' and Jucys-Murphy elements.}\label{sec:full-twist-jucys} Assume that $(W, S)$ is of finite type. Let $w_0$ stand for the longest element in $W$ with respect to the Bruhat length. Let $\Se(W,S) = t_{w_0}^{-2} \in \He(W,S)$. In classical types, we will write $\Se(X_n) \in \He(X_n)$, when $X$ is $A, B$ or $D$, or simply $\Se$ when the type is clear from the context.  

The following proposition is also standard:
\begin{proposition}
  \label{sec:iwah-hecke-algebr}
  The element $\Se(W,S)$ is central in $\He(W,S)$. 
\end{proposition}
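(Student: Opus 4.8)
The plan is to show that conjugation by $t_{w_0}$ implements, on the generators $t_s$, the diagram automorphism induced by $w_0$, and that this automorphism is an involution; squaring it then yields the identity, which forces $t_{w_0}^{-2}$ to be central. The only nontrivial structural input is that conjugation by $w_0$ permutes $S$.

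First I would record the standard multiplication rules in the basis $\{t_w\}$, all consequences of the quadratic relation \ref{item:1} rewritten as $t_s^2 = (v - v^{-1})t_s + 1$: for $s \in S$ and $w \in W$ one has $t_s t_w = t_{sw}$ if $l(sw) > l(w)$ and $t_s t_w = t_{sw} + (v - v^{-1}) t_w$ if $l(sw) < l(w)$, together with the mirror-image rules for right multiplication by $t_s$. Next I would establish the structural fact that $w_0 S w_0 = S$. Since $w_0$ is the longest element it is an involution, and conjugation by $w_0$ preserves length: from the standard identities $l(w_0 u) = l(w_0) - l(u)$ and $l(u w_0) = l(w_0) - l(u)$ one gets $l(w_0 u w_0) = l(u)$ for all $u \in W$. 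As the elements of length one are exactly the simple reflections, the length-preserving automorphism $\sigma(s) := w_0 s w_0$ carries $S$ bijectively onto $S$.

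Granting this, I would compute $t_{w_0} t_s$ and $t_{\sigma(s)} t_{w_0}$ separately and compare. Because $l(w_0 s) < l(w_0)$, the right-multiplication rule gives $t_{w_0} t_s = t_{w_0 s} + (v - v^{-1}) t_{w_0}$; because $\sigma(s)$ is again a simple reflection, $l(\sigma(s) w_0) < l(w_0)$, and the left-multiplication rule gives $t_{\sigma(s)} t_{w_0} = t_{\sigma(s)w_0} + (v - v^{-1}) t_{w_0}$. Since $\sigma(s) w_0 = w_0 s w_0 \cdot w_0 = w_0 s$, the two right-hand sides agree, so $t_{w_0} t_s = t_{\sigma(s)} t_{w_0}$, that is, $t_{w_0} t_s t_{w_0}^{-1} = t_{\sigma(s)}$.

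Finally I would iterate: conjugation by $t_{w_0}^2$ sends $t_s$ to $t_{\sigma^2(s)}$, and since $w_0^2 = e$ we have $\sigma^2 = \mathrm{id}$, whence $t_{w_0}^2 t_s t_{w_0}^{-2} = t_s$ for every $s \in S$. As the $t_s$ generate $\He(W,S)$, this shows $t_{w_0}^2$, and therefore its inverse $\Se(W,S) = t_{w_0}^{-2}$, is central. I expect the main obstacle to be precisely the structural claim $w_0 S w_0 = S$ (equivalently, that $\sigma$ is a well-defined involution of $S$); once this is in hand, the remainder is routine length bookkeeping and a single comparison of two multiplication expansions.
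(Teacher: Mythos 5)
Your proof is correct, and it is precisely the standard argument the paper has in mind: the paper states this proposition without proof, labeling it standard, and the usual proof is exactly your computation that conjugation by $t_{w_0}$ realizes the length-preserving involution $\sigma(s) = w_0 s w_0$ of $S$, so that $t_{w_0}^2$, and hence $\Se(W,S) = t_{w_0}^{-2}$, is central. All the supporting steps (the multiplication rules in the basis $\{t_w\}$, the identities $l(w_0 u) = l(w_0) - l(u)$ and $w_0^2 = e$, and the comparison $t_{w_0}t_s = t_{w_0 s} + (v-v^{-1})t_{w_0} = t_{\sigma(s)}t_{w_0}$) check out.
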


We will need the following ``Serre property'' of the operator of multiplication by $\Se(W,S)$.
\begin{proposition}
  \label{sec:full-twist-jucys-1}
  For any $h_1, h_2 \in \He(W,S)$ we have the identity
  \[
   \langle h_1, \Se(W,S)h_2 \rangle = \overline{\langle h_2, h_1 \rangle}.
  \]
\end{proposition}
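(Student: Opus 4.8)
The plan is to reduce the statement to a single structural fact about how the trace $\tau$ interacts with the bar involution, namely the intertwining relation
\[
  \tau(\Se(W,S)\,h) = \overline{\tau(\overline{h})} \qquad \text{for all } h \in \He(W,S),
\]
and then to deduce the proposition from it by purely formal manipulations with $i$ and $\overline{(\cdot)}$. First I would record the elementary compatibilities, each verified on the bases $\{t_w\}$, $\{t_w^{-1}\}$ and extended by (semi)linearity: that $i(\overline{t_w}) = t_w^{-1}$; that $i$ and the bar involution commute, $i(\overline{h}) = \overline{i(h)}$; that the trace is $i$-invariant, $\tau\circ i = \tau$; and, using $w_0^{-1}=w_0$, that $\overline{\Se(W,S)} = \Se(W,S)^{-1} = t_{w_0}^2$ and $i(\Se(W,S)) = \Se(W,S)$. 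Combined with the centrality of $\Se(W,S)$ (Proposition \ref{sec:iwah-hecke-algebr}) and the trace property (part (a) of Proposition \ref{sec:trace-funct-pair}), these yield in particular the symmetry $\tau(\Se(W,S)\,i(Y)) = \tau(\Se(W,S)\,Y)$ for every $Y$.

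The crux is the intertwining relation itself. Since both of its sides are $\mathcal{A}$-linear in $h$, it suffices to check it on the basis $\{t_w\}$, where it reads $\tau(\Se(W,S)\,t_w) = \delta_{w,e}$. To establish this I would pass to the pairing: because $i(\overline{t_{w_0}^2}) = \Se(W,S)$, one has $\tau(\Se(W,S)\,t_w) = \langle t_{w_0}^2, t_w\rangle$, and property \ref{item:3} of Proposition \ref{sec:trace-funct-pair} together with $i(\overline{t_{w_0}}) = t_{w_0}^{-1}$ rewrites this as $\langle t_{w_0},\, t_w t_{w_0}^{-1}\rangle$. The key length computation is that, since $l(w_0 w^{-1}) + l(w) = l(w_0)$ for every $w$, we have $t_{w_0} = t_{w_0 w^{-1}}\,t_w$, whence $t_w t_{w_0}^{-1} = t_{w_0 w^{-1}}^{-1}$ is a single dual-basis vector. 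The orthogonality relation of part (b) then gives $\langle t_{w_0},\, t_{w_0 w^{-1}}^{-1}\rangle = \delta_{w_0,\, w w_0} = \delta_{w,e}$, as desired.

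Finally I would assemble the proposition for arbitrary $h_1, h_2$, with no further reduction to bases needed. Applying the intertwining relation in the form $\overline{\tau(X)} = \tau(\Se(W,S)\,\overline{X})$ to $X = i(\overline{h_2})h_1$, and using $\overline{i(\overline{h_2})} = i(h_2)$, I obtain
\[
  \overline{\langle h_2, h_1\rangle} = \overline{\tau(i(\overline{h_2})\,h_1)} = \tau\bigl(\Se(W,S)\,i(h_2)\,\overline{h_1}\bigr).
\]
On the other hand, centrality of $\Se(W,S)$ gives $\langle h_1, \Se(W,S)h_2\rangle = \tau\bigl(\Se(W,S)\,i(\overline{h_1})\,h_2\bigr)$; applying the $i$-symmetry $\tau(\Se(W,S)\,Y) = \tau(\Se(W,S)\,i(Y))$ with $Y = i(\overline{h_1})h_2$, whose image $i(Y) = i(h_2)\,\overline{h_1}$ matches the expression above, concludes the proof.

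The only genuinely type-independent-but-nontrivial input is the length identity for $w_0$ used in the middle paragraph; everything else is bookkeeping with the two involutions. I therefore expect the main obstacle to be precisely the crux step, the evaluation $\tau(\Se(W,S)\,t_w) = \delta_{w,e}$, since this is where the specific structure of the full twist $\Se(W,S) = t_{w_0}^{-2}$ and of the longest element enters; once it is in place, the passage to general $h_1, h_2$ is automatic.
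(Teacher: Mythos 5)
Your proof is correct; I checked each step, including the auxiliary compatibilities ($i(\overline{t_w}) = t_w^{-1}$, $\tau\circ i = \tau$ verified on the $\{t_w^{-1}\}$ basis, $i(\Se(W,S)) = \Se(W,S)$, $\overline{\Se(W,S)} = t_{w_0}^2$), the crux evaluation $\tau(\Se(W,S)\,t_w) = \delta_{w,e}$, and the formal assembly at the end. The route, however, is organized differently from the paper's. The paper exploits sesquilinearity in \emph{both} slots at once: it reduces immediately to $h_1 = t_{w_1}^{-1}$, $h_2 = t_{w_2}$, applies property \ref{item:3} of Proposition \ref{sec:trace-funct-pair} a single time to rewrite $\langle t_{w_1}^{-1}, \Se(W,S) t_{w_2}\rangle$ as $\langle t_{w_0}t_{w_1}^{-1}, t_{w_0}^{-1}t_{w_2}\rangle$, and then uses the length additivity $l(w_0x^{-1}) + l(x) = l(w_0)$ on each factor to land directly on the orthogonality relation, giving $\delta_{w_1, w_2^{-1}}$ in three lines. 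You instead reduce only one argument to a basis, funneling the statement through the intermediate trace identity $\tau(\Se(W,S)h) = \overline{\tau(\overline{h})}$ and recovering the two-variable statement by involution bookkeeping ($i$ commuting with the bar involution, the symmetry $\tau(\Se(W,S)\,i(Y)) = \tau(\Se(W,S)\,Y)$, centrality). The underlying nontrivial input is identical in both proofs --- the longest-element length identity plus orthogonality --- so your version buys no extra generality in substance, but it does isolate a reusable, basis-free characterization of how $\Se(W,S)$ intertwines $\tau$ with the bar involution (a trace-level form of the ``Serre property''), at the price of several extra lemmas the paper's bilinear reduction never needs. Your closing self-assessment is accurate: the only genuinely nontrivial step is exactly where the paper also concentrates its effort.
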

\begin{proof}
  It is enough to prove the proposition for $h_1 = t_{w_1}^{-1}, h_2 = t_{w_2}$ for arbitrary $w_1, w_2 \in W$. We have, by Proposition \ref{sec:trace-funct-pair}~\ref{item:3},
  \begin{multline*}
    \langle t_{w_1}^{-1}, \Se(W,S)t_{w_2} \rangle = \langle t_{w_0}t_{w_1}^{-1}, t_{w_0}^{-1}t_{w_2}\rangle = \langle t_{w_0w_1^{-1}}, t_{w_2^{-1}w_0}^{-1} \rangle =\\
    = \delta_{w_0w_1^{-1},w_0w_2} = \delta_{w_1,w_2^{-1}} = \overline{\langle t_{w_2}, t_{w_1}^{-1}\rangle}.
\end{multline*}
This completes the proof.
\end{proof}

We have the natural embedding $\iota_n:\He(X_n) \to \He(X_{n+1})$, where $X$ is $A, B$ or $D$, induced by the embedding of the sets of labels of the Coxeter diagram. Using these embeddings, an element of $\He(X_n)$ can be considered as an element of $\He(X_m)$ for $m > n$. 

\begin{definition}
  For $n \geq 2$, let $J_n^X = \Se(X_{n})^{-1}\cdot \Se(X_{n-1})$ when $X$ is $A_{-1}, B$ or $D$. We let $J_1^A = 1, J_1^B = t_0^2$, $J_1^D = 1$. We call these elements \emph{Jucys-Murphy elements} of finite Hecke algebras in classical types.

\end{definition}

We have the following corollary of Proposition \ref{sec:iwah-hecke-algebr}.
\begin{corollary}
  \label{sec:iwah-hecke-algebr-1}
   For $X$ equal to $A_{-1}, B$ or $D$, elements $J_k^X$ commute with each other when considered lying in the same Iwahori-Hecke algebra via the embedding described above. The element $J_n^X$ lies in the centralizer of $\He(X_{n-1})$ in $\He(X_{n})$.
\end{corollary}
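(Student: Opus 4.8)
The plan is to reduce both assertions to the single observation that each $J_k^X$ with $k \ge 2$ commutes with the whole subalgebra $\He(X_{k-1})$; this is exactly the centralizer statement, and the mutual commutation of the $J_k^X$ then falls out as a special case. The only real input is the centrality of the full twists $\Se(X_m)$ supplied by Proposition \ref{sec:iwah-hecke-algebr}, together with the fact that the maps $\iota_m$ are algebra homomorphisms, so that $\He(X_{k-1})$ sits inside $\He(X_k)$ as a subalgebra.

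First I would prove the key claim: for $k \ge 2$, the element $J_k^X = \Se(X_k)^{-1}\Se(X_{k-1})$ commutes with every $h \in \He(X_{k-1})$. Indeed, $\Se(X_{k-1})$ is central in $\He(X_{k-1})$, so it commutes with $h$; and $\Se(X_k)$, being central in $\He(X_k) \supseteq \He(X_{k-1})$, commutes with $h$ as well, hence so does its inverse $\Se(X_k)^{-1}$. Since $J_k^X$ is a product of these two factors, it commutes with $h$. This is precisely the statement that $J_k^X$ lies in the centralizer of $\He(X_{k-1})$ in $\He(X_k)$.

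Next I would deduce mutual commutation. Note that $J_j^X \in \He(X_j)$ for every $j$: for $j \ge 2$ this is clear since both $\Se(X_j)$ and $\Se(X_{j-1})$ lie in $\He(X_j)$, while for $j = 1$ the explicit elements $J_1^A = 1$, $J_1^D = 1$, $J_1^B = t_0^2$ all lie in $\He(X_1)$. Given $j < k$ with $k \ge 2$, the inclusion $\He(X_j) \subseteq \He(X_{k-1})$ places $J_j^X$ in $\He(X_{k-1})$, so the key claim yields $J_k^X J_j^X = J_j^X J_k^X$. As every unordered pair $\{j,k\}$ with $j \neq k$ has a larger index that is $\ge 2$, this shows that all the $J_k^X$ pairwise commute, and the commutation persists after transporting them into a common algebra $\He(X_N)$ via the (injective) algebra maps $\iota_m$.

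There is no serious obstacle here: the argument is a formal consequence of centrality of the full twists. The only points requiring a little care are bookkeeping — verifying that $J_j^X$ genuinely lands in $\He(X_j)$ (including the degenerate $j = 1$ cases, where one checks $t_0^2 \in \He(B_1)$, consistent with $\Se(B_1) = t_0^{-2}$), and keeping the chain of inclusions $\He(X_j) \subseteq \He(X_{k-1}) \subseteq \He(X_k)$ straight so that the relevant $\Se$ is central in the algebra that actually contains $J_j^X$.
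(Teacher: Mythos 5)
Your proof is correct and follows exactly the route the paper intends: the corollary is stated as an immediate consequence of Proposition \ref{sec:iwah-hecke-algebr}, and your argument---$J_k^X$ centralizes $\He(X_{k-1})$ because $\Se(X_{k-1})$ is central there while $\Se(X_k)$ is central in the larger algebra, with mutual commutation of the $J_k^X$ then following from the inclusions $\He(X_j) \subseteq \He(X_{k-1})$---is precisely that formal deduction, with the $j=1$ edge cases correctly checked.
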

\begin{remark}
  The elements $J_k^A$ are sometimes called multiplicative Jucys-Murphy elements in the Iwahori-Hecke algebra of type $A$. There seem to be no agreement in the literature on which elements to call the Jucys-Murphy elements in other types. We chose our convention because under it the definition of the Jucys-Murphy element is uniform in all types, but elements $j_k^X$ for $X = B, D$ defined below arguably also deserve the name.  
\end{remark}

\subsection{Some central elements in the classical Iwahori-Hecke algebras.}
\label{sec:some-centr-elem}
In this section we follow \cite{Ram2003}.
Let $\He^{\aff}(A_{n-1})$ be the affine Hecke algebra associated to the root datum of $\mathrm{GL}_n$. It is a unital algebra over $\mathcal{A}$ generated by the elements $t_1, \dots, t_{n-1}$ and $\Theta_1^{\pm 1}$, where $t_1, \dots, t_{n-1}$ satisfy the relations of the Iwahori-Hecke algebra of type $A_{n-1}$ and, additionally, the following relations hold:
\begin{enumerate}
\item[(iii).] $t_1\Theta_1 t_1\Theta_1 = \Theta_1 t_1 \Theta_1 t_1$,
\item[(iv).] $t_i\Theta_1 = \Theta_1 t_i$ for $i > 1$.
\end{enumerate}
Inductively define $\Theta_i = t_i\Theta_{i+1}t_i \in \He^{\aff}(A_{n-1}), i = 2, \dots, n$. It is well-known that $\Theta_i^{\pm 1}$ generate a commutative subalgebra of $\He^{\aff}(A_{n-1})$, called the Bernstein subalgebra. We have the following particular case of a classical theorem, proved in \cite[Theorem 8.1]{lusztigSingularitiesCharacterFormulas1983} for all affine types.  
\begin{theorem}
 The center of $\He^{\aff}(A_{n-1})$ consists of the symmetric Laurent polynomials in $\Theta_1, \dots, \Theta_{n}$. 
\end{theorem}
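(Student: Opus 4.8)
The plan is to work in the Bernstein presentation and reduce the computation of the center to Weyl-invariant theory. Write $R = \mathcal{A}[\Theta_1^{\pm 1}, \dots, \Theta_n^{\pm 1}]$ for the Bernstein subalgebra, a Laurent polynomial ring (hence an integral domain) on which $S_n$ acts by permuting the $\Theta_i$; the goal is to identify $Z(\He^{\aff}(A_{n-1}))$ with $R^{S_n}$. I will take for granted two standard structural inputs from \cite{Ram2003}, \cite{lusztigSingularitiesCharacterFormulas1983}: the Bernstein--Lusztig PBW theorem, which states that $\He^{\aff}(A_{n-1})$ is a free left $R$-module with basis $\{t_w : w \in S_n\}$, and the straightening relation, which for a simple reflection $s_i$ and $f \in R$ takes the form $t_i f = s_i(f)\,t_i + r_i(f)$ with a correction term $r_i(f) \in R$ that vanishes exactly when $s_i(f) = f$. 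Iterating the latter gives $t_w f = w(f)\,t_w + \sum_{w' < w} c_{w'}\, t_{w'}$ with all $c_{w'} \in R$.

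First I would dispatch the easy inclusion $R^{S_n} \subseteq Z$. If $f \in R^{S_n}$ then $s_i(f)=f$ forces $r_i(f)=0$, so $t_i f = f t_i$ for every simple reflection; as $f$ manifestly commutes with every $\Theta_j$ and the algebra is generated by the $t_i$ together with $\Theta_1$, such an $f$ is central.

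The substance is the reverse inclusion, which I would split in two. Given a central $z$, it commutes in particular with each $\Theta_j$, and I claim this already places $z$ in $R$. Expanding $z = \sum_w f_w t_w$ in the PBW basis and choosing $w_{\max}$ maximal in the Bruhat order among the $w$ with $f_w \neq 0$, comparing the $t_{w_{\max}}$-coefficients of $z\Theta_j$ and $\Theta_j z$ (the lower tails produce only strictly smaller $t_{w'}$) yields $f_{w_{\max}}\bigl(\Theta_j - w_{\max}(\Theta_j)\bigr) = 0$ for all $j$. As $R$ is a domain and $f_{w_{\max}} \neq 0$, we get $w_{\max}(\Theta_j) = \Theta_j$ for all $j$, i.e. $w_{\max} = e$, so $z = f_e \in R$. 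Now $z \in R$ commutes with every $t_i$, and the straightening relation gives $t_i z - z t_i = (s_i(z) - z)\,t_i + r_i(z)$; since $r_i(z) \in R = R\cdot t_e$, the linear independence of $t_e$ and $t_{s_i}$ over $R$ forces the $t_{s_i}$-coefficient $s_i(z) - z$ to vanish. Running over all $i$ shows $z$ is $S_n$-invariant, i.e. $z \in R^{S_n}$, completing the identification.

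I expect the only genuinely delicate point to be the precise form of the straightening relation for an arbitrary $f \in R$ --- in particular, verifying that its correction term $r_i(f)$ is really an element of $R$ (i.e. carries no $t_{s_i}$) and vanishes exactly on $s_i$-invariants. Once this relation and the PBW freeness are in place, both inclusions are short, and the domain property of $R$ does the rest.
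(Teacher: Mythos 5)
Your argument is correct, but note that the paper does not actually prove this theorem: it quotes the statement as a special case of Lusztig's result \cite[Theorem 8.1]{lusztigSingularitiesCharacterFormulas1983}, which establishes the analogous description of the center for all affine types. What you have written out is the standard Bernstein-style centralizer argument in the $\mathrm{GL}_n$ case, and each step checks out: in the comparison of $z\Theta_j$ with $\Theta_j z$ you correctly use that the straightening tails are strictly Bruhat-lower, so every Bruhat-maximal $w$ in the support of $z$ satisfies $w(\Theta_j)=\Theta_j$ for all $j$, hence $w=e$ (and then the support is $\{e\}$, since any $w\neq e$ in the support would lie below some maximal element $\neq e$); the extraction of the $t_{s_i}$-coefficient via PBW-freeness then gives $s_i(z)=z$ for all $i$. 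The two inputs you defer are exactly the content of the Bernstein--Lusztig presentation, available in the sources the paper follows \cite{Ram2003}, \cite{lusztigSingularitiesCharacterFormulas1983}: freeness of $\He^{\aff}(A_{n-1})$ over $R$ with basis $\{t_w\}_{w\in S_n}$, and the relation $t_i f = s_i(f)t_i + r_i(f)$ with $r_i(f)=(v-v^{-1})(f-s_i(f))/(1-\Theta_i^{-1}\Theta_{i+1})$ (up to conventions); the point you flag as delicate is settled by the standard divisibility of $f-s_i(f)$ by $\Theta_i-\Theta_{i+1}$ in $R$, which is a unit multiple of $1-\Theta_i^{-1}\Theta_{i+1}$ in the Laurent ring, and $r_i(f)=0 \Leftrightarrow s_i(f)=f$ then follows since $R$ is a domain and $v-v^{-1}\neq 0$ in $\mathcal{A}$. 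Compared with the paper's citation, your route trades generality for transparency: Lusztig's proof runs uniformly over all affine types, with $\Theta$'s indexed by a weight lattice and invariance under the full Weyl group, while your specialization to the $\mathrm{GL}_n$ root datum buys the especially clean step that fixing every $\Theta_j$ forces $w=e$; the same skeleton generalizes verbatim once $S_n$ is replaced by a Weyl group acting faithfully on the lattice.
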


Fix $n \geq 2$. Note that
\begin{enumerate}[label=(\roman*).,ref=(\roman*)]
\item The Hecke algebra $\He(A_{n-1})$ is the quotient of the affine Hecke algebra $\He^{\aff}(A_{n-1})$ by the relation $\Theta_1 = 1$.
\item\label{item:11} The Hecke algebra $\He_{v,v_0}(B_n)$ is the quotient of $\He^{\aff}(A_{n-1})$ by the relation \[(\Theta_1 - v_0)(\Theta_1 + v_0^{-1}) = 0.\] Here $t_0$ is the image of $\Theta_1$.
\item The Hecke algebra $\He(D_n)$ is the subalgebra of the quotient \ref{item:11} with $v_0 = 1$,  ge\-ne\-ra\-ted by the images of $\Theta_1 t_1 \Theta_1^{-1}, t_1, \dots, t_{n-1}$. Here $t_{1'}$ is the image of $\Theta_1 t_1 \Theta_1^{-1}$.
\end{enumerate}
Note that this exhibits $\He(D_n)$ as a subalgebra of $\He_{v,1}(B_n)$.
Note also that $J^A_i$ is the image of $\Theta_i$ in $\He(A_{n-1})$ for $i = 1, \dots, n$. We have \[J^A_{i} = t_{i-1}\dots t_1t_1 \dots t_{i-1}.\]

\begin{definition}
  We define the following families of elements in the Hecke algebras of finite types $B, D$:
  \begin{enumerate}[]
  \item[(B).] Let $j^B_i$ be the image of $\Theta_i, i = 1, \dots, n,$ in $\He(B_{n})$. We have \[j^B_{i} = t_{i-1}\dots t_1t_0t_1 \dots t_{i-1}.\]
    and $J^B_i = (j^B_i)^2.$
  \item[(D).] Let $j^D_i$ be the image of $\Theta_i\Theta_1^{-1}, i = 1, \dots, n,$ in $\He(D_{n})$. We have \[j^D_{i} = t_{i-1}\dots t_2t_1't_1t_2 \dots t_{i-1}, i \geq 2, j_1^D = 1.\]
    and $J^D_i = (j^D_i)^2.$
  \end{enumerate}
\end{definition}
We have the following
\begin{corollary}
 \label{sec:jucys-murphy-central} 
 The elements $J^A_i$ (resp. $j^B_i, j^D_i$) commute with each other. If $P$ is a symmetric polynomial in $n$ variables, $P(J^A_1, \dots, J^A_n)$ (resp. $P(j^B_1, \dots, j^B_n)$) is central in $\He(A_n)$ (resp. in $\He_{v,v_0}(B_n)$). If $P$ is homogeneous of even degree, then $P(j^D_1, \dots, j^D_n)$ is central in $\He(D_n)$. 
\end{corollary}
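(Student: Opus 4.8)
The plan is to deduce everything from the structure of the affine Hecke algebra $\He^{\aff}(A_{n-1})$ and the three identifications relating it to the finite algebras that are recorded just before the definition. First I would establish commutativity. Since $\Theta_1, \dots, \Theta_n$ generate the commutative Bernstein subalgebra, their images $J^A_i$ and $j^B_i$ commute pairwise in $\He(A_{n-1})$ and $\He_{v,v_0}(B_n)$ respectively. The elements $\Theta_i\Theta_1^{-1}$ are products of pairwise commuting elements of the Bernstein subalgebra, hence commute among themselves, and so do their images $j^D_i$.

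For centrality in types $A$ and $B$ I would invoke the cited theorem on the center of $\He^{\aff}(A_{n-1})$: any symmetric polynomial $P(\Theta_1, \dots, \Theta_n)$, being a symmetric Laurent polynomial in particular, is central. The image of a central element under a surjective algebra homomorphism is central in the target, and by the list of identifications preceding the definition both $\He(A_{n-1})$ and $\He_{v,v_0}(B_n)$ are quotients of $\He^{\aff}(A_{n-1})$. The corresponding quotient maps send $P(\Theta_1, \dots, \Theta_n)$ to $P(J^A_1, \dots, J^A_n)$ and to $P(j^B_1, \dots, j^B_n)$ respectively, which are therefore central.

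The type $D$ case is the only point needing an extra idea, and is where I expect the main obstacle to lie, since $\He(D_n)$ appears as a subalgebra rather than a quotient. Here I would exploit homogeneity. If $P$ is symmetric and homogeneous of degree $d$, then because every $\Theta_i$ commutes with $\Theta_1^{-1}$ one has the identity
\[
  P(\Theta_1\Theta_1^{-1}, \dots, \Theta_n\Theta_1^{-1}) = \Theta_1^{-d}\,P(\Theta_1, \dots, \Theta_n)
\]
in $\He^{\aff}(A_{n-1})$. Passing to the quotient $\He_{v,1}(B_n)$, the image of $\Theta_1$ is $t_0$, which satisfies $t_0^2 = 1$ since $v_0 = 1$; when $d$ is even this forces the image of $\Theta_1^{-d}$ to equal $1$. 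Hence $P(j^D_1, \dots, j^D_n)$ is exactly the image of $P(\Theta_1, \dots, \Theta_n)$, namely $P(j^B_1, \dots, j^B_n)$ computed with $v_0 = 1$, which is central in all of $\He_{v,1}(B_n)$ by the previous paragraph. Since $P(j^D_1, \dots, j^D_n)$ also lies in the subalgebra $\He(D_n)$ by construction, it commutes with every element of $\He(D_n)$ and is therefore central there. The two things to verify with care are that the even-degree hypothesis is precisely what annihilates the stray $\Theta_1^{-d}$ factor, and that centrality in the ambient algebra $\He_{v,1}(B_n)$ together with membership in $\He(D_n)$ genuinely yields centrality in the subalgebra.
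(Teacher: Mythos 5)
Your proof is correct and takes essentially the same route as the paper, which states this corollary without an explicit proof as an immediate consequence of Lusztig's theorem on the center of $\He^{\aff}(A_{n-1})$ together with the identifications (i)--(iii): commutativity via the Bernstein subalgebra, centrality in types $A$ and $B$ by pushing central elements through the quotient maps, and in type $D$ via the homogeneity identity $P(\Theta_1\Theta_1^{-1}, \dots, \Theta_n\Theta_1^{-1}) = \Theta_1^{-d}P(\Theta_1, \dots, \Theta_n)$ combined with $t_0^2 = 1$ in $\He_{v,1}(B_n)$ for even $d$. The two points you flag for care both go through exactly as you describe, so your write-up supplies precisely the details the paper leaves implicit.
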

\section{Markov traces.}
\label{sec:markov-traces-1}
\subsection{Markov traces via Jucys-Murphy elements.}
\label{sec:markov-traces-via-1}
\begin{definition}
\label{sec:markov-traces}  
Let $\mathcal{R}$ be a commutative $\mathcal{A}$-algebra. A family $(\phi_n)$ of $\mathcal{A}$-linear functionals $\phi_n: \He(X_n),$ where $X$ is $A_{-1}, B$ or $D$ and $n \in \mathbb{Z}_{\geq 1}$ ($n \in \mathbb{Z}_{\geq 2}$ in type $D$) is called a Markov trace if there are elements $\mu, \rho \in \mathcal{R}$ such that the following conditions are satisfied.
\begin{enumerate}
\myitem[.]{(M1)}\label{item:4} $\phi_n$ is a trace function, i.e. $\phi_n(hh') = \phi_n(h'h)$. This property is a generalization of the first Markov move from the theory of link invariants. 

  Recall the embedding $ \iota = \iota_{n-1}:\He(X_{n-1}) \to \He(X_n)$ defined in Section \ref{sec:full-twist-jucys}. The functionals $\phi_{n-1}$ and $\phi_n$, $n \geq 2$ must satisfy 
\myitem[.]{(U1)}\label{item:5} 
  \(   \phi_n(\iota(h)) = \rho \phi_{n-1}(h). \) This property is a generalization of a rule for removing an unlinked circle in the theory of link invariants.
\myitem[.]{(M2)}\label{item:6} 
  \(   \phi_n(\iota(h)t_n) = \mu \phi_{n-1}(h). \)
This property is a generalization of the second Markov move from the theory of link invariants.
\end{enumerate}

Note that if $(\phi_n)$ is a Markov trace with constants $\rho, \mu$,  then, for arbitrary $\lambda_1, \lambda_2 \in \mathcal{R}$, $(\lambda_1\lambda_2^n\phi_n)$ is again a Markov trace with constants $\lambda_2\rho, \lambda_2\mu$.

\begin{remark}
  We exclude small $n$ in our definition of the Markov trace because some special treatment is needed for the embeddings $B_0 \to B_1, D_0 \to D_1 \to D_2$. For the geometric traces these properties are extended to small $n$, see Section \ref{sec:geom-mark-trac}.
\end{remark}

\end{definition}
\begin{definition}
  Assume that the involution $v \mapsto v^{-1}$ is extended to $\mathcal{R}$. We say that an element $z \in \He(W,S) \otimes_\mathcal{A}\mathcal{R}$ represents a functional $\phi:\He(W,S) \to \mathcal{R}$ if $\phi(h) = \langle z, h\rangle$ for any $h \in \He(W,S)$. It is easy to see that $\phi$ satisfies the property \ref{item:4} if and only if $z$ is in the center of $\He(W,S)$. In this case we write $\phi = \tr_z$.
\end{definition}

Let $\mathcal{R} = \mathcal{A}(a)$ and let $\overline{a} = a^{-1}$. Define
\[
  \zeta_n = \prod_{i = 1}^{n} (1 + a^{-1}J^X_i), \zeta_0 = 1,
\]
an element in $\He(X_n)$ when $X = A_{-1}, B$ or $D$. By Corollary \ref{sec:jucys-murphy-central}, elements $\zeta_n$ are central in the corresponding Iwahori-Hecke algebras.  We prove 
\begin{theorem}
  \label{sec:markov-traces-via}
  The family of functionals $(\tr_{\zeta_n})$ is a Markov trace with constants $\rho = 1 + a, \mu = v-v^{-1}.$
\end{theorem}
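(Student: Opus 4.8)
The plan is to verify the three axioms by induction on $n$, exploiting the factorization $\zeta_n = \zeta_{n-1}\cdot(1 + a^{-1}J_n^X)$ together with the defining identity $J_n^X = \Se(X_n)^{-1}\Se(X_{n-1})$, uniformly in all types $A,B,D$. Axiom (M1) is immediate: by Corollary \ref{sec:jucys-murphy-central} the element $\zeta_n$ is central, and by definition $\tr_z$ is a trace exactly when $z$ is central. Two preliminary remarks organize the rest. First, the pairing restricts to the parabolic subalgebra: for $x,y \in \He(X_{n-1})$ the value $\langle x,y\rangle$ computed in $\He(X_n)$ agrees with the one in $\He(X_{n-1})$, as follows from $\tau(h)=h_e$ and the orthogonality relation of Proposition \ref{sec:trace-funct-pair}. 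Second, since $\langle\,,\rangle$ is antilinear in its first argument and $\overline{a^{-1}}=a$, expanding $\zeta_n = \zeta_{n-1} + a^{-1}\zeta_{n-1}J_n^X$ in the first slot converts the coefficient $a^{-1}$ into $a$; this is the mechanism producing the constant $1+a$ while keeping $\mu$ independent of $a$.

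The heart of the argument is a single reduction, valid for any $g \in \He(X_n)$:
\[
  \langle \zeta_{n-1}J_n^X,\ g\rangle = \overline{\langle g,\ \Se(X_{n-1})\zeta_{n-1}\rangle}.
\]
To establish it I would write $\zeta_{n-1}J_n^X = \Se(X_{n-1})\zeta_{n-1}\Se(X_n)^{-1}$, using that $\Se(X_{n-1})$ is central in $\He(X_{n-1})$ and that $\Se(X_n)$ is central in $\He(X_n)$ by Proposition \ref{sec:iwah-hecke-algebr}; then move $\Se(X_n)^{-1}$ into the second argument via Proposition \ref{sec:trace-funct-pair}\ref{item:3}, checking the bookkeeping identity $i(\overline{\Se(X_n)^{-1}}) = \Se(X_n)$ (which rests on $\overline{\Se}=\Se^{-1}$ and $i(\Se)=\Se$); and finally apply the Serre property of Proposition \ref{sec:full-twist-jucys-1} to the resulting $\langle \Se(X_{n-1})\zeta_{n-1},\ \Se(X_n)g\rangle$.

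For (U1) I would take $g=\iota(h)$. The term $\langle\zeta_{n-1},\iota(h)\rangle$ restricts to $\phi_{n-1}(h)$, while the reduction followed by a second application of the Serre property inside $\He(X_{n-1})$ turns the Jucys--Murphy term into $\overline{\overline{\langle\zeta_{n-1},h\rangle}}=\phi_{n-1}(h)$, the two conjugations cancelling; weighting by $a$ gives $\phi_n(\iota(h))=(1+a)\phi_{n-1}(h)$. For (M2) I would take $g=\iota(h)t_n$ and split the first term using the quadratic relation \ref{item:1} in the form $t_n = t_n^{-1}+(v-v^{-1})$: the summand $\langle\zeta_{n-1},\iota(h)t_n^{-1}\rangle$ vanishes by Corollary \ref{sec:trace-funct-pair-1}, since $\iota(h)t_n^{-1}$ lies in the span of $t_w^{-1}$ with $w\notin W(X_{n-1})$, leaving exactly $(v-v^{-1})\phi_{n-1}(h)$. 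In the Jucys--Murphy term, the reduction produces $\overline{\langle\iota(h),\ \Se(X_{n-1})\zeta_{n-1}t_n^{-1}\rangle}$, which vanishes by the same Corollary \ref{sec:trace-funct-pair-1}; hence $\phi_n(\iota(h)t_n)=(v-v^{-1})\phi_{n-1}(h)$.

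The main obstacle, and the point most deserving of care, is the asymmetry in (M2): naively one expects the nonvanishing contribution to come from the Jucys--Murphy factor $J_n^X$, but that term vanishes, and the whole trace arises instead from the $t_e^{-1}$-component of the \emph{new} generator $t_n$ expanded in the inverse basis, i.e. the constant $v-v^{-1}$ in $t_n = t_n^{-1}+(v-v^{-1})$. Getting this right hinges on invoking Corollary \ref{sec:trace-funct-pair-1} in the correct basis: vanishing in the second argument requires expansion in the $t_w^{-1}$ basis, and $t_n$ — unlike $t_n^{-1}$ — has nonzero coefficient along $t_e^{-1}$. The remaining work is the involution bookkeeping ($\overline{J_i^X}=(J_i^X)^{-1}$, $i(\overline{t_n})=t_n^{-1}$) and tracking which conjugations cancel; a base case such as $n=2$ in type $A$ provides a useful consistency check.
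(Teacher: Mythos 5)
Your proof is correct and follows essentially the same route as the paper: expand $\zeta_n = \zeta_{n-1}(1+a^{-1}J_n^X)$, use antilinearity to turn $a^{-1}$ into $a$, handle the Jucys--Murphy term via Proposition \ref{sec:full-twist-jucys-1} (your ``single reduction'' is exactly the paper's double application of the Serre property for \ref{item:5} and its single application for \ref{item:6}), and kill the remaining terms in \ref{item:6} using $t_n = t_n^{-1} + (v-v^{-1})$ together with Corollary \ref{sec:trace-funct-pair-1}. The only cosmetic difference is that in the Jucys--Murphy term of \ref{item:6} you move $t_n$ into the second argument before invoking Corollary \ref{sec:trace-funct-pair-1}, whereas the paper applies the corollary directly to $\overline{\langle \iota(h)t_n, \zeta_{n-1}\Se(X_{n-1})\rangle}$.
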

\begin{proof}
  We first check the property \ref{item:5}. We have
  \[
    \langle \zeta_n, \iota(h)\rangle = \langle\zeta_{n-1}(1+a^{-1}J^X_n), \iota(h) \rangle = \langle \zeta_{n-1}, \iota(h)\rangle + a\langle\zeta_{n-1}J_n^X, \iota(h)\rangle. 
  \]
  Applying Proposition~\ref{sec:full-twist-jucys-1} twice --- recall that $J_n^X = \Se(X_n)^{-1}\Se(X_{n-1})$ --- we get the equality \(\langle\zeta_{n-1}J_n^X, \iota(h)\rangle = \langle\zeta_{n-1}, \iota(h)\rangle\) and \ref{item:5} follows.

  We now check the property \ref{item:6}. We have
   \begin{multline}
\label{eq:1}
\langle \zeta_n, \iota(h)t_n\rangle = \langle\zeta_{n-1}(1+a^{-1}J^X_n), \iota(h)t_n \rangle = \langle \zeta_{n-1}, \iota(h)t_n\rangle + a\langle\zeta_{n-1}J_n^X, \iota(h)t_n\rangle = \\
= \langle \zeta_{n-1}, \iota(h)t_n^{-1}\rangle +(v-v^{-1}) \langle \zeta_{n-1}, \iota(h)\rangle + a\langle\zeta_{n-1}J_n^X, \iota(h)t_n\rangle. 
\end{multline}
From Corollary \ref{sec:trace-funct-pair-1}, we get that
\(\langle\zeta_{n-1},\iota(h)t_n^{-1}\rangle = 0.\)

  For the third summand in the last expression in \eqref{eq:1}, applying Proposition \ref{sec:full-twist-jucys-1}, we have
  \[
    \langle\zeta_{n-1}J_n^X, \iota(h)t_n\rangle = \overline{\langle \iota(h)t_n,\zeta_{n-1}\Se(X_{n-1})\rangle} = 0,\]
with the last equality again following from Corollary \ref{sec:trace-funct-pair-1}. We obtain \ref{item:6}.
\end{proof}
\begin{remark}
 We will show below that, in types $A$ and $B$ (but not in type $D$), the traces $\tr_{\zeta_n}$ coincide with the traces coming from geometry. See Section \ref{sec:geom-mark-trac}.
\end{remark}
\subsection{General Markov traces in types $B, D$.}
\label{sec:gener-mark-trac-3}
In type $A$, the classical result of Jones and Ocneanu \cite{jonesHeckeAlgebraRepresentations1987} says that a Markov trace with given constants $\rho, \mu$ is defined uniquely up to an overall scaling. Thus, Theorem~\ref{sec:markov-traces-via} gives a general description of a Markov trace in type $A$.

In types $B, D$, the general description of Markov traces was obtained by Geck and Lambropoulou \cite{geckMarkovTracesKnot2004}, \cite{geckTraceFunctionsIwahoriHecke1998}. We recall their results.

For $n \geq 1$, define the elements
\[T_n = t_{n-1}\dots t_1 t_0 t_1^{-1}\dots t_{n-1}^{-1}\in \He(B_n), n \geq 2, T_1 = t_0,\]
and
\[U_n = t_{n-1}\dots t'_1 t_1^{-1}\dots t_{n-1}^{-1}\in \He(D_n), n \geq 2, U_1 = 1.\]

\begin{theorem}[\cite{geckMarkovTracesKnot2004}, \cite{geckTraceFunctionsIwahoriHecke1998}]
  \label{sec:gener-mark-trac-1}
 Let $\mathcal{R} = \mathcal{A}(a,y_1,\dots,y_n,\dots)$, with infinite number of variable $y_k$. There is a unique Markov trace $(\tr^X_{n,(y_{\bullet})})$, where $X$ is $B$ or $D$, with values in $\mathcal{R}$, with constants $\rho = 1 + a, \mu = v-v^{-1}$, normalization $\tr^{X}_{n,(y_\bullet)}(1) = (1+a)^n$ and additional properties:
  \begin{enumerate}
  \myitem[.]{(B)}\label{item:2} $\tr^{B}_{n,(y_{\bullet})}(T_1T_2 \dots T_n) = y_n$, $n \geq 1$, in type $B$.
  \myitem[.]{(D)}\label{item:7} $\tr^{D}_{2n,(y_{\bullet})}(U_1U_2 \dots U_{2n}) = y_{2n}$, $n \geq 1$, in type $D$.
  \end{enumerate}
Any Markov trace with the chosen $\rho, \mu$ and normalization is of this form.  Moreover, every such Markov trace in type $D$ is obtained by restriction from the Markov trace on $\He_{v,1}(B_n)$ (see Section \ref{sec:some-centr-elem}).
\end{theorem}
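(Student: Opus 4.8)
The plan is to establish existence and uniqueness separately, in each case by induction on $n$ organized around the module structure of $\He(X_n)$ over the image $\iota(\He(X_{n-1}))$. For type $B$ this module is free of rank $[W(B_n):W(B_{n-1})]=2n$, and one convenient basis splits into $n$ ``type $A$'' elements and $n$ elements carrying a single sign change on the $n$-th strand, for instance
\[
  1,\quad t_{n-1}\cdots t_i\ (1\le i\le n-1),\quad T_n,\quad T_n\, t_{n-1}\cdots t_i\ (1\le i\le n-1).
\]
I will also use throughout that, by the definition of $\tr_z$, an $\mathcal{A}$-linear functional satisfies \ref{item:4} precisely when it is represented by a unique central element $z$; the classification is thus equivalent to describing the compatible families of central elements selected by \ref{item:5} and \ref{item:6}.

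\textbf{Uniqueness.} I would show that \ref{item:5}, \ref{item:6} and the trace property \ref{item:4} determine $\phi_n$ on each basis element above from $\phi_{n-1}$ and the single new scalar $y_n$. On $\iota(h)\cdot 1$ this is \ref{item:5}, consistent with the prescribed normalization $\tr^X_n(1)=(1+a)^n$. On $\iota(h)\,t_{n-1}\cdots t_i$ with $i<n-1$ I would factor the tail as $t_{n-1}\cdot(t_{n-2}\cdots t_i)$, cyclically move the second factor --- which lies in $\iota(\He(B_{n-1}))$ --- to the front by \ref{item:4}, and then apply \ref{item:6}; together with the case $i=n-1$ handled directly by \ref{item:6}, this is the type $A$ reduction of Jones and Ocneanu carried out inside $\He(B_n)$. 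On the sign-changing elements the reduction is more delicate and constitutes the genuinely type $B$ part of the argument: using \ref{item:4} and \ref{item:6} one strips the $n$-th strand and lowers the number of sign changes, the irreducible residues being the values $\tr^B_m(T_1\cdots T_m)=y_m$ fixed by \ref{item:2}. Running the induction shows every value is a fixed polynomial in $1+a$, $v-v^{-1}$ and the $y_k$, which proves uniqueness and the assertion that any Markov trace with the chosen data is of this form.

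\textbf{Existence, and the main obstacle.} I would define $\phi_n$ by the recursion extracted from the uniqueness argument --- prescribing its values by the reduction rules dictated by \ref{item:5} and \ref{item:6}, setting $\phi_m(T_1\cdots T_m)=y_m$ as demanded by \ref{item:2}, and extending $\mathcal{A}$-linearly. It is then immediate that the family satisfies \ref{item:5} and \ref{item:6} and realizes the prescribed parameters; the substantive point, and what I expect to be the main obstacle, is to verify the trace property \ref{item:4} for the functional so defined. This amounts to checking $\phi_n(hh')=\phi_n(h'h)$ on generators, i.e. invariance under the defining relations of $\He(B_n)$, and is exactly where the braid relation between $t_0$ and $t_1$ must be reconciled with the recursion, together with the fact that the conjugates $T_k$ do \emph{not} commute. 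An alternative, in the spirit of Theorem \ref{sec:markov-traces-via}, is to represent $\phi_n$ by an explicit central element assembled from symmetric functions in the commuting elements $j^B_i$ (Corollary \ref{sec:jucys-murphy-central}): centrality then gives \ref{item:4} for free, but one must re-verify \ref{item:5} and \ref{item:6} by the Serre-property computation of Proposition \ref{sec:full-twist-jucys-1} and the vanishing Corollary \ref{sec:trace-funct-pair-1}, and show the coefficients can be solved for to hit an arbitrary parameter vector $(y_k)$ compatibly across all $n$.

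\textbf{Type $D$ by restriction.} For the ``Moreover'' statement I would use the embedding $\He(D_n)\hookrightarrow\He_{v,1}(B_n)$ of Section \ref{sec:some-centr-elem}, under which $t_1'=t_0t_1t_0$ (since $t_0^2=1$ when $v_0=1$), so that each $U_k$ is expressed through the type $B$ sign changes via this relation. The restriction of a type $B$ Markov trace is again a trace, and since the embeddings $\iota_n$ match in the two types, \ref{item:5} and \ref{item:6} persist with the same $\rho,\mu$. Because type $D$ sign changes occur only in even number, the relevant irreducible residue is $U_1\cdots U_{2n}$, which is precisely why \ref{item:7} is imposed at even rank. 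Matching the even parameters $y_{2n}$ on the two sides then identifies every type $D$ Markov trace with the restriction of one on $\He_{v,1}(B_n)$, completing the argument.
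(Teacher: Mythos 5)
First, a point of framing: the paper does not prove this statement at all --- it is Geck--Lambropoulou's classification, quoted with citations (``We recall their results''), and everything the paper itself proves (Theorem \ref{sec:gener-mark-trac-2}, Corollary \ref{sec:gener-mark-trac-4}) \emph{relies} on this uniqueness as an input. So your sketch cannot be compared to an internal proof; it must be measured against the cited originals, whose broad shape (induction along the tower $\He(B_{n-1}) \subset \He(B_n)$, using a decomposition of $\He(B_n)$ into $\He(B_{n-1})$-bimodule pieces generated by $1$, $t_{n-1}$ and $T_n$) your sketch does reproduce correctly.

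Measured as a proof, however, there are genuine gaps at exactly the load-bearing points. For uniqueness, the reduction of the ``sign-changing'' values is not merely delicate: the Markov axioms say nothing about $\phi_n(\iota(h)T_n)$, so to run the induction one must prove that $h \mapsto \phi_n(\iota(h)T_n)$ is again a trace on $\He(B_{n-1})$ satisfying Markov-type compatibilities (this is the content of Geck--Lambropoulou's analysis, echoed in Proposition \ref{sec:gener-mark-trac-5} of the paper); without that, your claim that the only irreducible residues are $\phi_m(T_1\cdots T_m)=y_m$ is an assertion, not a consequence --- note the $T_k$ do not commute, and \ref{item:6} only controls the single new generator, so cyclic moves alone do not terminate the reduction. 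You also never verify that your proposed $2n$-element set is actually a basis (or at least a spanning set of the required bimodule form). For existence, you explicitly leave \ref{item:4} unproved, and that \emph{is} the theorem: the recursion defines a functional only once one shows the reduction rules are well defined (independent of the chosen expression) and trace-invariant, which is where the original proof does its real work. Your fallback via central elements gets \ref{item:4} for free but does not obviously reach an arbitrary parameter sequence $(y_k)$: the paper's $\beta_n(y)$ only realizes the specialization $y_n=y^n$, and producing, for all $n$ compatibly, symmetric functions in the $j^B_i$ whose traces hit independent values $y_1,y_2,\dots$ is an unproved (and nontrivial) step. Finally, in type $D$ you prove only the easy direction: that restriction along $\He(D_n)\hookrightarrow \He_{v,1}(B_n)$ yields a Markov trace. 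The ``Moreover'' clause is the converse --- \emph{every} type $D$ Markov trace is such a restriction --- and ``matching the even parameters'' does not give surjectivity; one needs an extension argument, e.g.\ via $\He_{v,1}(B_n) = \He(D_n)\oplus t_0\He(D_n)$ together with invariance of type $D$ Markov traces under conjugation by $t_0$, plus an argument that the odd-rank values are forced. As a strategy outline your proposal is faithful to the cited proof; as a proof it leaves all three of its hardest steps open, one of them acknowledged, two not.
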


We will be interested in a particular case of these traces when $y_n$ is of the form $y_n = y^n.$ Let from now on $\mathcal{R} = \mathcal{A}(y)$ and denote by $\tr^{X}_{n,y}$ the Markov trace from Theorem \ref{sec:gener-mark-trac-1} with $y_n = y^n$ for all $n \geq 1$. Extend the bar involution to $\mathcal{R}$, formally writing $\overline{y}$ --- we assume that such an extension exists under any specialization of $y$ we will use. To simplify the formulas, we adopt the notation $\alpha = v - v^{-1}, \alpha_0 = v_0 - v_0^{-1}$.
Define the elements
\[
  \beta_n(y) = \prod_{i = 1}^n(1 + (\overline{y}+\alpha_0)j^B_i + a^{-1}J^B_i) \in \He_{v,v_0}(B_n) \otimes_\mathcal{A}\mathcal{R}.
\]
By Corollary \ref{sec:jucys-murphy-central}, the element $\beta_n(y)$ is central in $\He_{v,v_0}(B_n)\otimes_\mathcal{A}\mathcal{R}$. 
We have the following
\begin{theorem}
  \label{sec:gener-mark-trac-2}
  The equality $\tr_{n,y}^B = \tr_{\beta_n(y)}$ holds.
\end{theorem}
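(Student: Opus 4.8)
The plan is to verify that the family $(\tr_{\beta_n(y)})$ satisfies every property that characterizes $\tr^B_{n,y}$ in Theorem \ref{sec:gener-mark-trac-1}, and then to invoke the uniqueness statement there. Concretely, I must check three things: (i) $(\tr_{\beta_n(y)})$ is a Markov trace with $\rho = 1+a$ and $\mu = v - v^{-1}$; (ii) the normalization $\tr_{\beta_n(y)}(1) = (1+a)^n$; and (iii) property \ref{item:2}, namely $\tr_{\beta_n(y)}(T_1\cdots T_n) = y^n$.

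For (i), property \ref{item:4} is immediate because $\beta_n(y)$ is central by Corollary \ref{sec:jucys-murphy-central}. For \ref{item:5} and \ref{item:6} I would adapt the proof of Theorem \ref{sec:markov-traces-via} almost verbatim, using the factorization $\beta_n(y) = \beta_{n-1}(y)\bigl(1 + (\overline{y}+\alpha_0)j^B_n + a^{-1}J^B_n\bigr)$ together with the antilinearity of $\langle\,,\rangle$ in the first slot, which turns the scalars into $y - \alpha_0$ and $a$. The constant term and the $J^B_n$-term are handled exactly as in Theorem \ref{sec:markov-traces-via}: since $J^B_n = \Se(B_n)^{-1}\Se(B_{n-1})$, two applications of Proposition \ref{sec:full-twist-jucys-1} reduce the $J^B_n$-term to the constant one for \ref{item:5} and annihilate it for \ref{item:6}. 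The genuinely new point is the vanishing of the linear $j^B_n$-term: writing $j^B_n = t_{n-1}j^B_{n-1}t_{n-1}$ and peeling off the outer $t_{n-1}$ via Proposition \ref{sec:trace-funct-pair}\ref{item:3}, the surviving first argument is $\beta_{n-1}(y)\,t_{n-1}\,j^B_{n-1}$, which lies in $\mathrm{span}_{\mathcal{A}}\langle t_w : w\notin W(B_{n-1})\rangle$ (this span is a right $\He(B_{n-1})$-submodule, being the sum of the non-identity coset blocks $t_x\He(B_{n-1})$); hence the pairing vanishes by Corollary \ref{sec:trace-funct-pair-1}. This yields $\rho = 1+a$ and $\mu = v - v^{-1}$, and (ii) then follows by induction from \ref{item:5} with base case $\tr_{\beta_0(y)}(1) = 1$.

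The substance of the proof is (iii), which I would prove by induction on $n$ in the sharper form $\tr_{\beta_n(y)}(T_1\cdots T_n) = y\,\tr_{\beta_{n-1}(y)}(T_1\cdots T_{n-1})$. The base case $n=1$ is a direct computation in $\He(B_1)$: with $T_1 = t_0 = j^B_1$ and $J^B_1 = t_0^2 = \alpha_0 t_0 + 1$, the three contributions to $\langle\beta_1(y),t_0\rangle$ evaluate to $\alpha_0$, $(y-\alpha_0)$ and $0$, summing to $y$. For the inductive step I write $T_1\cdots T_n = \iota(T_1\cdots T_{n-1})\,T_n$ with $T_n = t_{n-1}T_{n-1}t_{n-1}^{-1}$, expand $\langle\beta_n(y),\iota(T_1\cdots T_{n-1})T_n\rangle$ through the factorization of $\beta_n(y)$, and reduce each of the three resulting $\He(B_n)$-pairings to $\He(B_{n-1})$-pairings. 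The tools are the same three as above: Proposition \ref{sec:trace-funct-pair}\ref{item:3} to move the $t_{n-1}^{\pm1}$ across the pairing, Proposition \ref{sec:full-twist-jucys-1} to dispose of $J^B_n = \Se(B_n)^{-1}\Se(B_{n-1})$, and Corollary \ref{sec:trace-funct-pair-1} to discard every intermediate expression that lands in $\mathrm{span}_{\mathcal{A}}\langle t_w : w\notin W(B_{n-1})\rangle$. I expect the $J^B_n$-term to vanish (as in the base case), the constant term to contribute $\alpha_0\,\tr_{\beta_{n-1}(y)}(T_1\cdots T_{n-1})$, and the $j^B_n$-term to contribute $\tr_{\beta_{n-1}(y)}(T_1\cdots T_{n-1})$, so that the total is $\bigl(\alpha_0 + (y-\alpha_0)\bigr)\tr_{\beta_{n-1}(y)}(T_1\cdots T_{n-1}) = y\,\tr_{\beta_{n-1}(y)}(T_1\cdots T_{n-1})$. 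With (i)--(iii) established, Theorem \ref{sec:gener-mark-trac-1} forces $\tr_{\beta_n(y)} = \tr^B_{n,y}$.

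The main obstacle is the inductive step of (iii). Unlike the pure vanishing arguments in (i), here the conjugation $t_{n-1}(\,\cdot\,)t_{n-1}^{-1}$ defining $T_n$ and the doubled factor $t_{n-1}(\,\cdot\,)t_{n-1}$ defining $j^B_n$ genuinely push mass back into $\He(B_{n-1})$ after a $t_{n-1}$ cancels against a $t_{n-1}^{-1}$, so the cross terms do not simply die and must be evaluated. The delicate bookkeeping is to track, for each of the constant and $j^B_n$ contributions, exactly which part survives the coset truncation of Corollary \ref{sec:trace-funct-pair-1}, and to confirm that the surviving pieces recombine into a single scalar $y$ times the lower trace; making the interaction of $T_n$ (built from the inverse generators $t_i^{-1}$) with $j^B_n$ (built from the $t_i$) telescope correctly is where the computation is most likely to go wrong.
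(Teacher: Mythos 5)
Your overall architecture --- verify that $(\tr_{\beta_n(y)})$ is a Markov trace with $\rho = 1+a$, $\mu = v-v^{-1}$, check the normalization, compute the values on $T_1\cdots T_n$, and invoke the uniqueness in Theorem \ref{sec:gener-mark-trac-1} --- is exactly the paper's, your base case computation is correct, and restricting property \ref{item:2} to $h = T_1\cdots T_{n-1}$ does legitimately suffice. But there is a genuine gap exactly where you flag one: the inductive step of (iii) is never proved, only predicted. You ``expect'' the $J_n^B$-term to vanish and the constant and $j_n^B$-terms to contribute $\alpha_0$ and $y-\alpha_0$ times the lower trace, but supply no mechanism forcing this. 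The paper proves the stronger statement $\tr_{\beta_n(y)}(\iota(h)T_n) = y\,\tr_{\beta_{n-1}(y)}(h)$ for \emph{all} $h \in \He(B_{n-1})$ (Proposition \ref{sec:gener-mark-trac-5}), and does so with no conjugation bookkeeping at all, resting on three structural facts about $T_n$ (Lemma \ref{sec:gener-mark-trac}): \ref{item:8} $T_n^{-1} = T_n - \alpha_0$; \ref{item:9} $T_n \in \mathrm{span}_{\mathcal{A}}\langle t_w, w \in W(B_n)\backslash W(B_{n-1})\rangle$ while $T_n^{-1} \in \mathrm{span}_{\mathcal{A}}\langle t_w^{-1}, w \in W(B_n)\backslash W(B_{n-1})\rangle$; and, crucially, \ref{item:10} $(j_n^B)^{-1}T_n - 1 \in \mathrm{span}_{\mathcal{A}}\langle t_w^{-1}, w \in W(B_n)\backslash W(B_{n-1})\rangle$. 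Then the constant term gives $\alpha_0$ (substitute $T_n = T_n^{-1}+\alpha_0$ and kill $T_n^{-1}$ by Corollary \ref{sec:trace-funct-pair-1}); the $j_n^B$-term gives $y - \alpha_0$ (move $(j_n^B)^{-1}$ across the pairing using that $j_n^B$ centralizes $\He(B_{n-1})$, then apply \ref{item:10}); and the $J_n^B$-term dies by Proposition \ref{sec:full-twist-jucys-1} together with \ref{item:9}. Item \ref{item:10} is precisely the ``telescoping'' of $T_n$ (built from $t_i^{-1}$) against $(j_n^B)^{-1}$ that you identify as the danger point; without it, or an equivalent computation, your step from $n-1$ to $n$ is a conjecture, not a proof.

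A second, smaller flaw: your vanishing argument for the $j_n^B$-term in \ref{item:5} is invalid as stated. After peeling the outer $t_{n-1}$ off $j_n^B = t_{n-1}j_{n-1}^Bt_{n-1}$ via Proposition \ref{sec:trace-funct-pair}~\ref{item:3}, the pairing becomes $\langle\beta_{n-1}(y)t_{n-1}j_{n-1}^B,\, \iota(h)t_{n-1}^{-1}\rangle$, and now \emph{neither} argument lies in $\He(B_{n-1})$, so Corollary \ref{sec:trace-funct-pair-1} does not apply; nor are the spans $\mathrm{span}_{\mathcal{A}}\langle t_w\rangle$ and $\mathrm{span}_{\mathcal{A}}\langle t_w^{-1}\rangle$ over $w \notin W(B_{n-1})$ mutually orthogonal (e.g.\ $\langle t_{n-1}, t_{n-1}^{-1}\rangle = 1$). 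Your peeling does work for \ref{item:6}, where the peeled $t_{n-1}^{-1}$ cancels the $t_{n-1}$ in the second slot and leaves $\iota(h)$ there. The correct and simpler observation, used in the paper, is that $j_n^B = t_c$ and $j_n^Bt_{n-1}^{-1} = t_{c'}$ are themselves single standard basis elements with $c, c' \in W(B_n)\backslash W(B_{n-1})$, so Corollary \ref{sec:trace-funct-pair-1} applies directly to both \ref{item:5} and \ref{item:6} with no peeling at all.
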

The property in Theorem \ref{sec:gener-mark-trac-1}~\ref{item:2} follows from the more general property (compare \cite[Proposition 4.5]{geckMarkovTracesKnot2004}).
\begin{proposition}
  \label{sec:gener-mark-trac-5}
 For any $h \in \He(B_{n-1})$, we have $\tr_{\beta_n(y)}(\iota(h)T_n) = y\tr_{\beta_{n-1}(y)}(h).$ 
\end{proposition}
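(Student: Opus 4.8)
The plan is to compute $\tr_{\beta_n(y)}(\iota(h)T_n) = \langle \beta_n(y), \iota(h)T_n\rangle$ by expanding the last factor of the product $\beta_n(y) = \beta_{n-1}(y)\bigl(1 + (\overline{y}+\alpha_0)j^B_n + a^{-1}J^B_n\bigr)$, exactly in the spirit of the proof of Theorem \ref{sec:markov-traces-via}. This gives three terms:
\[
\langle \beta_{n-1}(y), \iota(h)T_n\rangle + (\overline{y}+\alpha_0)\langle \beta_{n-1}(y)j^B_n, \iota(h)T_n\rangle + a^{-1}\langle \beta_{n-1}(y)J^B_n, \iota(h)T_n\rangle,
\]
and I expect the whole computation to come down to analyzing each term against the support structure of $T_n = t_{n-1}\dots t_1 t_0 t_1^{-1}\dots t_{n-1}^{-1}$ and of the Jucys-Murphy elements $j^B_n = t_{n-1}\dots t_1 t_0 t_1 \dots t_{n-1}$ and $J^B_n = (j^B_n)^2 = \Se(B_n)^{-1}\Se(B_{n-1})$, using Corollary \ref{sec:trace-funct-pair-1} and Proposition \ref{sec:full-twist-jucys-1} to kill or simplify the pairings.

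First I would handle the first term. Since $\iota(h) \in \He(B_{n-1})$ and $T_n$ involves $t_0, \dots, t_{n-1}$ with a genuine $t_{n-1}$, I expect $\iota(h)T_n$ to lie in the span of $t_w$ with $w \notin W(B_{n-1})$, so that Corollary \ref{sec:trace-funct-pair-1} forces this pairing against the $\He(B_{n-1})$-element $\beta_{n-1}(y)$ to vanish — or, if it does not vanish outright, to reduce to a controlled boundary contribution. The third term is the one already understood in type-$A$-like fashion: with $J^B_n = \Se(B_n)^{-1}\Se(B_{n-1})$, Proposition \ref{sec:full-twist-jucys-1} converts $\langle \beta_{n-1}(y)J^B_n, \iota(h)T_n\rangle$ into $\overline{\langle \iota(h)T_n, \beta_{n-1}(y)\Se(B_{n-1})\rangle}$, and I would argue this vanishes by Corollary \ref{sec:trace-funct-pair-1} since $\iota(h)T_n$ again leaves $W(B_{n-1})$ while $\beta_{n-1}(y)\Se(B_{n-1}) \in \He(B_{n-1})$.

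The crux is the middle term $\langle \beta_{n-1}(y)j^B_n, \iota(h)T_n\rangle$, which must produce the expected answer $y\,\tr_{\beta_{n-1}(y)}(h) = y\langle \beta_{n-1}(y), h\rangle$ after accounting for the constant $(\overline{y}+\alpha_0)$ and any contributions surviving from the other two terms. The key identity I would establish is a relation between $j^B_n$, $T_n$, and the quadratic relation for $t_0$: writing $T_n = j^B_n \cdot (t_1^{-1}t_1^{-1})\cdots$ more carefully, note $T_n$ and $j^B_n$ differ precisely by replacing the right half $t_1 \dots t_{n-1}$ by its inverse $t_1^{-1}\dots t_{n-1}^{-1}$, so I would use Proposition \ref{sec:trace-funct-pair}\ref{item:3} to move the bar-dual of $j^B_n$ across the pairing and combine it with $T_n$, producing telescoping of the symmetric tails $t_{n-1}\dots t_1$ against $t_1^{-1}\dots t_{n-1}^{-1}$ together with the middle factor $t_0 \cdot t_0$ governed by relation (ii\textquotesingle\textquotesingle), i.e. $t_0^2 = \alpha_0 t_0 + 1$. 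This is where the coefficient $\overline{y}+\alpha_0$ and the defining property $\tr^B_{n,(y_\bullet)}(T_1\cdots T_n) = y_n$ must conspire correctly, and it is the step I expect to be the main obstacle: keeping track of which residual terms (a pure $\langle\beta_{n-1}(y),h\rangle$ piece, an $\alpha_0$-piece from the quadratic relation, and a $T_{n-1}$-type piece) recombine into exactly $y\langle\beta_{n-1}(y),h\rangle$ with the stray contributions from the first and third terms canceling. Once this bookkeeping is done, summing the three contributions yields the claimed recursion, and the special case Theorem \ref{sec:gener-mark-trac-1}\ref{item:2} follows by iterating from $T_1 = t_0$ and $\tr_{\beta_1(y)}(T_1) = y$.
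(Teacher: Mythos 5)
Your decomposition of $\beta_n(y)=\beta_{n-1}(y)\bigl(1+(\overline{y}+\alpha_0)j_n^B+a^{-1}J_n^B\bigr)$ into three summands is exactly the paper's starting point, and your treatment of the $J_n^B$-term (Proposition \ref{sec:full-twist-jucys-1} followed by Corollary \ref{sec:trace-funct-pair-1}) matches the paper verbatim. But two genuine problems sit at the crux. First, the coefficients: the pairing is $\mathcal{A}$-antilinear in the first slot, so pulling scalars out of $\langle\beta_{n-1}(y)\,\cdot\,,\,\cdot\,\rangle$ bar-conjugates them; the correct coefficients are $\overline{\overline{y}+\alpha_0}=y-\alpha_0$ and $a$, not $(\overline{y}+\alpha_0)$ and $a^{-1}$ as in your expansion. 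This is not cosmetic, since the final answer arises as $(y-\alpha_0)+\alpha_0=y$; with your unbarred coefficient you would land on $\overline{y}$ instead. Second, the first summand does \emph{not} vanish, and your hedge about a ``controlled boundary contribution'' hides the missing idea: Corollary \ref{sec:trace-funct-pair-1} kills elements of $\mathrm{span}_{\mathcal{A}}\langle t_w^{-1}, w\in W(B_n)\backslash W(B_{n-1})\rangle$ only in the \emph{second} slot, whereas $\iota(h)T_n$ naturally lies in the $t_w$-span, to which that corollary does not apply in the second argument. The paper's key identity is $T_n^{-1}=T_n-\alpha_0$ (Lemma \ref{sec:gener-mark-trac}~\ref{item:8}, a consequence of $T_n$ being a conjugate of $t_0$ and hence satisfying its quadratic relation): writing $T_n=T_n^{-1}+\alpha_0$, the $T_n^{-1}$ part dies by Lemma \ref{sec:gener-mark-trac}~\ref{item:9}, leaving precisely the contribution $\alpha_0\langle\beta_{n-1}(y),\iota(h)\rangle$ that your plan would drop.

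Your picture of the middle term is also off in a telling way. After moving $i(\overline{j_n^B})=(j_n^B)^{-1}$ across the pairing (legitimate, since $j_n^B$ centralizes $\He(B_{n-1})$), the middle of $(j_n^B)^{-1}T_n$ is $t_0^{-1}t_0=1$, not $t_0\cdot t_0$, so the relation $t_0^2=\alpha_0t_0+1$ plays no role here; the $\alpha_0$ enters only through the first summand as above. What one actually uses is $t_i^{-1}=t_i-\alpha$ for $i\geq 1$ to see that $(j_n^B)^{-1}T_n-1\in\mathrm{span}_{\mathcal{A}}\langle t_w^{-1}, w\in W(B_n)\backslash W(B_{n-1})\rangle$ (Lemma \ref{sec:gener-mark-trac}~\ref{item:10}), whence the middle summand contributes $(y-\alpha_0)\langle\beta_{n-1}(y),\iota(h)\rangle$ cleanly --- there is no delicate cancellation between the three summands, contrary to the bookkeeping you flag as the main obstacle and leave unresolved. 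As written the proposal does not close; supplied with the identity $T_n^{-1}=T_n-\alpha_0$, the support statement for $(j_n^B)^{-1}T_n$, and the bar-conjugated coefficients, it becomes exactly the paper's proof.
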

\begin{proof}
   We will use the following lemma, verified by a direct computation.
  \begin{lemma}
    \label{sec:gener-mark-trac}
    \begin{enumerate}[label=(\alph*).,ref=(\alph*)]
    \item \label{item:8} We have $T_n^{-1} = T_n - \alpha_0.$
    \item \label{item:9} The elements $T_n, T_n^{-1}$ satisfy \[T_n \in \mathrm{span}_{\mathcal A}\langle t_w, w \in W(B_n)\backslash W(B_{n-1})\rangle\] and \[T_n^{-1} \in \mathrm{span}_{\mathcal A}\langle t_w^{-1}, w \in W(B_n)\backslash W(B_{n-1})\rangle.\]
    \item \label{item:10} The element $(j^B_n)^{-1}T_n$ satisfies \[(j^B_n)^{-1}T_n - 1\in \mathrm{span}_{\mathcal A}\langle t_w^{-1}, w \in W(B_n)\backslash W(B_{n-1})\rangle.\]
    \end{enumerate}
  \end{lemma}

  To simplify the formulas, introduce the notation $y' = y-\alpha_0, \overline{y'} = \overline{y}+\alpha_0$. We have
  \begin{equation*}
    \langle\beta_n(y), \iota(h)T_n\rangle = \langle\beta_{n-1}(y), \iota(h)T_n\rangle + {y'}\langle\beta_{n-1}(y)j^B_n,\iota(h)T_n\rangle +a\langle\beta_{n-1}(y)J_n^B, \iota(h)T_n\rangle.
  \end{equation*}
  Let us examine the summands on the right-hand side of this equation one by one. We will rutinely use the Corollary \ref{sec:trace-funct-pair-1}. For the first summand, we have, by Lemma \ref{sec:gener-mark-trac}~\ref{item:8}, \ref{item:9}, 
  \[
   \langle\beta_{n-1}(y), \iota(h)T_n\rangle = \langle\beta_{n-1}(y), \iota(h)T_{n}^{-1}\rangle + \alpha_0\langle\beta_{n-1}(y), \iota(h)\rangle = \alpha_0\langle\beta_{n-1}(y), \iota(h)\rangle.
  \]
  For the second summand, first note that $j^B_n$ is in the centralizer of $\He(X_{n-1})$ in $\He(X_n)$. It follows that
  \[
  {y'}\langle\beta_{n-1}(y)j^B_n,\iota(h)T_n\rangle = {y'}\langle\beta_{n-1}(y),\iota(h)(j_n^B)^{-1} T_n\rangle.
  \]
  Thus we have, by Lemma \ref{sec:gener-mark-trac}~\ref{item:10}, 
  \[
 {y'}\langle\beta_{n-1}(y),\iota(h)(j^{B}_n)^{-1}T_n\rangle = y'\langle\beta_{n-1}(y), \iota(h)\rangle.
  \]
  For the third summand, by Proposition \ref{sec:full-twist-jucys-1} and Lemma \ref{sec:gener-mark-trac}~\ref{item:9}, we have
  \[
    a\langle\beta_{n-1}(y)J_n^B, \iota(h)T_n\rangle = \overline{\langle\iota(h)T_n, \beta_{n-1}(y)\Se(B_{n-1})\rangle} = 0. 
  \]
  Collecting all three summands together, we get
  \[
    \langle\beta_n(y), \iota(h)T_n\rangle = (\alpha_0+y')\langle\beta_{n-1}(y),\iota(h)\rangle = y\langle\beta_{n-1}(y),\iota(h)\rangle,
  \]
  which completes the proof.
\end{proof}
\begin{proof}[Proof of Theorem \ref{sec:gener-mark-trac-2}.]
  It remains to check the properties \ref{item:5} and \ref{item:6}. Note that $j^B_n = t_{c}, j^B_nt_n^{-1} = t_{c'}$ for $c, c' \in W(B_n)\backslash W(B_{n-1})$. It follows from Corollary \ref{sec:trace-funct-pair-1}, that, for any $h\in \He(B_{n-1})$ we have
  \[
    \langle (\overline{y}+\alpha_0)\beta_{n-1}j_n^{B}, \iota(h) \rangle = 0 = \langle (\overline{y}+\alpha_0)\beta_{n-1}j_n^{B}, \iota(h)t_n\rangle,
  \]
  which reduces the proof to the proof of Theorem \ref{sec:markov-traces-via}.
\end{proof}
From Theorems \ref{sec:gener-mark-trac-1} and \ref{sec:gener-mark-trac-2} we get the following description of a Markov trace $\tr^D_{n,y}$.
\begin{corollary}
\label{sec:gener-mark-trac-4}  Let \[P_n(x_1, \dots, x_n;y) = \prod_{i=1}^n(1 + \overline{y}x_i + a^{-1}x_i^2),\] a symmetric polynomial in variables $(x_i)$ over $\mathcal{R}$. Write $P_n = P_n^+ + P_n^-$, a sum of even and odd monomial degree parts. Let $\delta_n(y) = P_n^+(j_1^D, \dots, j_n^D;y)$. Then $\tr_{n,y}^D = \tr_{\delta_n(y)}$.
\end{corollary}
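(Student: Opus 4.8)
The plan is to deduce the type-$D$ statement by restriction from the type-$B$ result of Theorem \ref{sec:gener-mark-trac-2}, using the specialization $v_0 = 1$ in which $\He(D_n)$ sits inside $\He_{v,1}(B_n)$. By the last assertion of Theorem \ref{sec:gener-mark-trac-1}, $\tr^D_{n,y}$ is the restriction to $\He(D_n)$ of $\tr^B_{n,y}$ specialized at $v_0 = 1$; combined with Theorem \ref{sec:gener-mark-trac-2} this yields $\tr^D_{n,y}(h) = \langle\beta_n(y)|_{v_0=1},\, h\rangle$ for every $h\in\He(D_n)$. The problem thus reduces to comparing $\beta_n(y)|_{v_0=1}$ with $\delta_n(y)$ as elements paired against $\He(D_n)$.

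First I would record how the Jucys-Murphy elements of the two types are related at $v_0 = 1$. Since $j^D_i$ is the image of $\Theta_i\Theta_1^{-1}$ and $t_0$ is the image of $\Theta_1$, while $(\Theta_1-1)(\Theta_1+1)=0$ forces $t_0^2 = 1$, one gets $j^D_i = j^B_i t_0$ with $t_0$ commuting with every $j^B_i$ (all being images of the commuting $\Theta$'s); in particular $(j^B_i)^2 = (j^D_i)^2$. Feeding this and $\alpha_0 = 0$ into the definition of $\beta_n(y)$, each factor becomes $1 + \overline{y}\,j^D_i t_0 + a^{-1}(j^D_i)^2$, so that $\beta_n(y)|_{v_0=1} = \prod_{i=1}^n\big((1 + a^{-1}(j^D_i)^2) + \overline{y}\,j^D_i t_0\big)$. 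Expanding and using that $t_0$ commutes with all the $j^D_i$ while $t_0^2 = 1$, the powers of $t_0$ collapse according to the parity of the number of chosen linear factors; this is precisely the even/odd split of $P_n$, giving the key identity $\beta_n(y)|_{v_0=1} = \delta_n(y) + P_n^-(j^D_1,\dots,j^D_n;y)\,t_0$.

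It then remains to show that the second summand pairs to zero against $\He(D_n)$, i.e. $\langle z\,t_0,\, h\rangle = 0$ for $z = P_n^-(j^D_\bullet;y)\in\He(D_n)$ and $h\in\He(D_n)$. This is the main obstacle, because $W(D_n)$ is not a parabolic subgroup of $W(B_n)$, so Corollary \ref{sec:trace-funct-pair-1} does not apply directly. The device I would use is the sign automorphism: at $v_0 = 1$, and only there, since it needs $\alpha_0 = 0$ to be compatible with the quadratic relation $t_0^2 = \alpha_0 t_0 + 1$, the assignment $t_0\mapsto -t_0$, $t_i\mapsto t_i$ $(i\geq 1)$ extends to an algebra automorphism $\sigma$ of $\He_{v,1}(B_n)$ acting on the standard basis by $\sigma(t_w) = \epsilon(w)t_w$, where $\epsilon\colon W(B_n)\to\{\pm 1\}$ is the homomorphism with kernel $W(D_n)$. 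Since $\sigma$ scales each $t_w^{-1}$ by $\epsilon(w)$ and $\epsilon(e)=1$, it preserves $\tau$; as it also commutes with $i$ and the bar involution, it preserves the pairing $\langle\,,\rangle$. Because $\sigma$ fixes the generators $t_1,\dots,t_{n-1}$ and $t_{1'} = t_0t_1t_0$, it fixes $\He(D_n)$ pointwise, whereas $\sigma(z t_0) = \sigma(z)\sigma(t_0) = -z t_0$. Hence $\langle z t_0,\, h\rangle = \langle\sigma(z t_0),\,\sigma(h)\rangle = -\langle z t_0,\, h\rangle$, which forces the pairing to vanish.

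Assembling the pieces, for $h\in\He(D_n)$ we obtain $\tr^D_{n,y}(h) = \langle\beta_n(y)|_{v_0=1},\, h\rangle = \langle\delta_n(y),\, h\rangle = \tr_{\delta_n(y)}(h)$, which is the claim. I would close by checking that $\tr_{\delta_n(y)}$ is a legitimate trace: $P_n^+$ is a symmetric polynomial all of whose homogeneous components have even degree, so by Corollary \ref{sec:jucys-murphy-central} each evaluates to a central element of $\He(D_n)$, whence $\delta_n(y)$ is central. The only point demanding care beyond the automorphism argument is verifying that the restriction statement of Theorem \ref{sec:gener-mark-trac-1} indeed matches the $y_n = y^n$ normalizations on the two sides, but this is immediate from the way the parameters are indexed there.
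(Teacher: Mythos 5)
Your proposal is correct and follows essentially the same route as the paper's proof: restrict $\tr^B_{n,y}$ at $v_0=1$ via the last clause of Theorem \ref{sec:gener-mark-trac-1}, use $j_i^D = j_i^B t_0$ with $t_0^2=1$ to split $\beta_n(y)|_{v_0=1}$ into $\delta_n(y)$ plus an odd part times $t_0$, and kill the odd part by orthogonality to $\He(D_n)$. Your only addition is the sign automorphism $t_0\mapsto -t_0$ used to prove that orthogonality, a step the paper merely asserts; this is a clean and valid justification (the automorphism does preserve $\tau$, the bar involution, $i$, and hence the pairing, and fixes $\He(D_n)$ pointwise).
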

\begin{proof}
  Recall that the trace $\tr_{n,(y_{\bullet})}^D$ is obtained by restriction of $\tr_{n,(y_{\bullet})}^B$ along the embedding $\iota_{D\to B}:\He(D_n) \to \He_{v,1}(B_n)$, so that we have
  \[
    \tr_{n,y}^D(h) = \langle \beta_n(y),\iota_{D\to B}(h)\rangle = \langle P(j_1^B,\dots,j_n^B;y),\iota_{D\to B}(h)\rangle.
  \]
  It remains to observe that for all $i$ we have $j_i^D = t_0j_i^B$, and so every even degree monomial in $j_i^B$ is equal to the same monomial in $j_i^D$, since $t_0^2 = 1$ in $\He_{v,1}(B_n)$ and $t_0$ commutes with $j_i^B$ for all $i$; and that every odd degree monomial in $j_i^B$ is orthogonal to $\iota_{D\to B}(h)$ with respect to $\langle\,,\rangle$. This completes the proof of the Corollary.
\end{proof}
We have the following analogue of Proposition \ref{sec:gener-mark-trac-5} in type $D$.
\begin{proposition}
  \label{sec:gener-mark-trac-6}
For $n \geq 1$ and any $h \in \He(D_{2n-2})$, we have \[\tr_{2n,y}^D(\iota\circ\iota(h)U_{2n-1}U_{2n}) = y^2\tr_{2n-2,y}^D(h).\]
\end{proposition}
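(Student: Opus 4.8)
The plan is to push everything into type $B$, where the single-step reduction is already available in Proposition~\ref{sec:gener-mark-trac-5}, and then apply it twice. Recall from the proof of Corollary~\ref{sec:gener-mark-trac-4} that $\tr^D_{m,y} = \langle \beta_m(y), \iota_{D\to B}(-)\rangle$, where $\beta_m(y)$ is central in $\He_{v,1}(B_m)$ and is the specialization of the type-$B$ element at $v_0 = 1$, so that $\alpha_0 = 0$ and $t_0^2 = 1$. Since $\iota_{D\to B}$ is an algebra map compatible with the inclusions $\iota$, setting $h' := \iota_{D\to B}(h) \in \He_{v,1}(B_{2n-2})$ turns the left-hand side into
\[
\tr^D_{2n,y}(\iota\circ\iota(h)U_{2n-1}U_{2n}) = \langle \beta_{2n}(y),\, h'\cdot \iota_{D\to B}(U_{2n-1})\,\iota_{D\to B}(U_{2n})\rangle,
\]
with $h'$ regarded inside $\He_{v,1}(B_{2n})$.

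First I would record the key identity $\iota_{D\to B}(U_m) = t_0 T_m$, valid for all $m \geq 1$. This is a direct computation: $t_{1'}$ maps to $t_0 t_1 t_0$, and since $t_0$ commutes with $t_2,\dots,t_{m-1}$, the leading $t_0$ of $t_{1'} = t_0 t_1 t_0$ can be pulled to the front of the word defining $U_m$, leaving exactly $t_0 T_m$ (for $m=1$ both sides equal $1$, using $t_0^2 = 1$). Substituting, the pairing becomes $\langle \beta_{2n}(y),\, h'\, t_0 T_{2n-1}\, t_0\, T_{2n}\rangle$.

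Now I would peel off the two twists one at a time. The element $g := h'\, t_0 T_{2n-1}\, t_0$ lies in $\He_{v,1}(B_{2n-1})$, so Proposition~\ref{sec:gener-mark-trac-5} at level $2n$ applies to $\langle \beta_{2n}(y), g\, T_{2n}\rangle$ and gives $y\langle \beta_{2n-1}(y), g\rangle$. For the second twist I would use that $\tr_{\beta_{2n-1}(y)}$ is a trace (as $\beta_{2n-1}(y)$ is central) to cyclically move the trailing $t_0$ to the front, so that $\langle \beta_{2n-1}(y), h' t_0 T_{2n-1} t_0\rangle = \langle\beta_{2n-1}(y), (t_0 h' t_0)\, T_{2n-1}\rangle$. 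Crucially $\tilde h := t_0 h' t_0$ again lies in $\He_{v,1}(B_{2n-2})$, so Proposition~\ref{sec:gener-mark-trac-5} at level $2n-1$ applies once more and yields $y\langle \beta_{2n-2}(y), t_0 h' t_0\rangle$. A final application of cyclicity together with $t_0^2 = 1$ collapses $t_0 h' t_0$ to $h'$, giving $\langle\beta_{2n-2}(y), t_0 h' t_0\rangle = \langle\beta_{2n-2}(y), h'\rangle = \tr^D_{2n-2,y}(h)$. Collecting the two factors of $y$ produces the claimed identity.

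The main obstacle is purely organizational: tracking the extra $t_0$ factors introduced by $\iota_{D\to B}(U_m) = t_0 T_m$ and verifying that at each stage the relevant element ($g$, then $\tilde h$) genuinely sits in the correct parabolic subalgebra, so that Proposition~\ref{sec:gener-mark-trac-5} is applicable. The one point needing care is that conjugation by $t_0$ preserves $\He_{v,1}(B_{2n-2})$ (which holds because $t_0$ already lies in it), and the lowest case $n=1$, where the inner applications sit at levels $2$ and $1$, should be checked directly against the conventions for small $n$.
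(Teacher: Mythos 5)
Your proposal is correct and is essentially the paper's own argument: push everything into type $B$ via the embedding $\iota_{D\to B}$, then peel off the two twists by applying Proposition~\ref{sec:gener-mark-trac-5} twice, using the trace property \ref{item:4} (cyclicity, valid since $\beta_m(y)$ is central) in between. The only discrepancy is cosmetic: you compute $\iota_{D\to B}(U_m)=t_0T_m$ whereas the paper writes $T_mt_0$ --- your version is in fact the correct algebra identity (the two differ, since $t_0$ does not commute with $T_m$ in $\He_{v,1}(B_m)$, but they lead to the same value after cyclic rearrangement under the trace, so both routes, including your careful handling of $\tilde h = t_0h't_0$ and the $n=1$ case, reach $y^2\tr^D_{2n-2,y}(h)$).
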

\begin{proof}
 Note that we have $\iota_{D \to B}(U_k) = T_kt_0$, and so 
\[\tr_{2n,y}^D(\iota\circ\iota(h)U_{2n-1}U_{2n}) = \tr_{2n,y}^B(\iota\circ\iota(h)T_{2n-1}t_0T_{2n}t_0).\]
Using the property \ref{item:4} and Proposition \ref{sec:gener-mark-trac-5} twice, we get the result. 
\end{proof}
\subsection{Geometric Markov traces.}
\label{sec:geom-mark-trac}
Markov traces $\tr_n^B:\He_{v,v}(B_n) \to \mathcal{A}(a), \tr_n^D:\He(D_n) \to \mathcal{A}(a)$ studied in \cite{gomi2006markov}, \cite{websterGeometryMarkovTraces2009} are specializations of traces $\tr_{n,y}^X$ satisfying the extension of the properties \ref{item:5}, \ref{item:6} for the embeddings $B_0 \to B_1$ and $D_1 \to D_2, D_0 \to D_1$. In particular, these traces satisfy
\[
  \tr_1^B(t_0) = v-v^{-1}, \tr_1^D(t_1) = \tr_1^D(t_{1'}) = v-v^{-1}, \tr^D_2(t_1t_{1'}) = (v-v^{-1})^2.
\]
The condition on $\tr_1^B$ forces $y = v - v^{-1}$ and $\overline{y}-\alpha_0 = 0$. Thus we get that $\tr_n^B = \tr_{\zeta_n}$, the trace from Theorem \ref{sec:markov-traces-via}.

The conditions on $\tr_1^D, \tr_2^D$ imply that
\[
  y^2 = \tr_2^D(U_1U_2) = \tr_2^D(t_1t_{1'}^{-1}) = \tr_2^D(t_1t_{1'}) - \alpha \tr_2^D(t_1) = \alpha^2 - \alpha^2(1+a) = -\alpha^2a.
\]

It is easy to see that geometric Markov traces $\tr_n^B, \tr_n^D$ take values in $\mathcal{A}[a]$. Let $\tr_n^{X,k}:\He(X_n) \to \mathcal{A}$ be the coefficient of $\tr_n^X$ at $a^k$. We summarize the above discussion in the following
\begin{theorem}
  \label{sec:geom-mark-trac-1}
  The functional $\tr_n^{B,k}$ is represented by the $k$th elementary symmetric polynomial in Jucys-Murphy elements:
  \[
    \tr_n^{B,k}(h) = \langle e_k(J_1^B, \dots, J_n^B), h\rangle.
  \]

  Let $e_k'(x_1, \dots, x_n;\alpha)$ be the symmetric polynomial in $(x_i)$ appearing as a degree $2k$ homogeneous part in
  \[
    \prod_{i=1}^n(1 + \alpha x_i - x_i^2) = \prod_{i=1}^n(v-x_i)(v^{-1}+x_i).
  \]
  where $\alpha = v - v^{-1}$.
  Then
   \[
    \tr_n^{D,k}(h) = (-1)^k\langle e'_k(j_1^D, \dots, j_n^D; \alpha),h \rangle.
  \]
\end{theorem}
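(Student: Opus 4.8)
The plan is to read off the coefficient of $a^k$ directly from the central representatives already produced for the full geometric traces, using that $\langle\,,\rangle$ is antilinear in its first argument (so that $\overline{a^{-m}} = a^m$) to convert negative powers of $a$ into positive ones. For type $B$ I would start from the identity $\tr_n^B = \tr_{\zeta_n}$ recorded just above the theorem, where $\zeta_n = \prod_{i=1}^n(1 + a^{-1}J_i^B)$. Expanding the product and grouping by powers of $a^{-1}$ gives
\[
\zeta_n = \sum_{m=0}^n a^{-m}\, e_m(J_1^B,\dots,J_n^B),
\]
with each $e_m(J_1^B,\dots,J_n^B) \in \He(B_n)$ defined over $\mathcal{A}$ (carrying no power of $a$). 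Applying antilinearity of the pairing in the first slot,
\[
\tr_n^B(h) = \langle \zeta_n, h\rangle = \sum_{m=0}^n a^{m}\,\langle e_m(J_1^B,\dots,J_n^B), h\rangle,
\]
and extracting the coefficient of $a^k$ yields the first formula. This also re-confirms that $\tr_n^B$ is polynomial in $a$, so the extraction is well posed.

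For type $D$ I would begin from Corollary \ref{sec:gener-mark-trac-4}, which gives $\tr_{n,y}^D = \tr_{\delta_n(y)}$ with $\delta_n(y) = P_n^+(j_1^D,\dots,j_n^D;y)$ the even-degree part of $P_n = \prod_{i=1}^n(1 + \overline{y}x_i + a^{-1}x_i^2)$ (here $\alpha_0 = 0$, since we sit inside $\He_{v,1}(B_n)$). Specializing to the geometric value $y^2 = -\alpha^2 a$ established above, consider a monomial of $\delta_n(y)$ in which the linear term $\overline{y}x_i$ is chosen $p$ times and the quadratic term $a^{-1}x_i^2$ is chosen $q$ times; since we keep only the even part, $p$ is even, and this monomial carries the scalar $\overline{y}^{\,p}a^{-q}$. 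Antilinearity turns this into $y^{p}a^{q}$, and substituting $y^{p} = (y^2)^{p/2} = (-1)^{p/2}\alpha^{p}a^{p/2}$ rewrites the scalar as $(-1)^{p/2}\alpha^{p}a^{p/2+q}$. The coefficient of $a^k$ therefore collects exactly the monomials with $p/2+q = k$, i.e.\ $p + 2q = 2k$, each weighted by $(-1)^{p/2}\alpha^p$. Note that $\overline{y}$ never needs to be evaluated individually: after antilinearity only $y^2$ enters.

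It then remains to match this against $(-1)^k e_k'$. In the degree-$2k$ part $e_k'$ of $\prod_{i=1}^n(1+\alpha x_i - x_i^2)$, the same monomial (with $p$ linear and $q$ quadratic choices, $p + 2q = 2k$) appears with coefficient $\alpha^p(-1)^q$, so in $(-1)^k e_k'$ it appears with $(-1)^{k+q}\alpha^p = (-1)^{p/2}\alpha^p$, using $k+q = p/2 + 2q$. This agrees term by term with the scalars found above, giving $\tr_n^{D,k}(h) = (-1)^k\langle e_k'(j_1^D,\dots,j_n^D;\alpha), h\rangle$.

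The main obstacle is precisely this sign-and-exponent bookkeeping in type $D$: one must track how the bar involution and the substitution $y^2 = -\alpha^2 a$ interact across the even/odd splitting of $P_n$, and check that the evenness of $p$ forces $(-1)^{p/2}$ to emerge consistently on both sides. Everything else is a routine expansion of products of Jucys--Murphy elements together with the antilinearity of $\langle\,,\rangle$, and the polynomiality of $\tr_n^X$ in $a$ (stated before the theorem) guarantees that the coefficient functionals $\tr_n^{X,k}$ are genuinely defined.
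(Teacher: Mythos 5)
Your proposal is correct and follows essentially the same route as the paper, which states the theorem as a summary of the preceding discussion: in type $B$ it expands $\zeta_n = \sum_m a^{-m} e_m(J_1^B,\dots,J_n^B)$ and uses antilinearity of $\langle\,,\rangle$ in the first slot, and in type $D$ it specializes $\delta_n(y)$ at $y^2 = -\alpha^2 a$ (with $\alpha_0 = 0$) and matches monomials against $(-1)^k e_k'$. Your explicit sign-and-exponent bookkeeping, including the observation that only even powers of $\overline{y}$ occur so the specialization is well defined, is exactly the verification the paper leaves implicit.
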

The symmetric polynomial $e_k'$ can be written explicitly as
\[
  e_k' = \sum_{\substack{i+j = 2k,\\ i,j\geq 0}}(-1)^{i}v^{j-i}e_ie_j.
\]
Note that the highest $a$-coefficient is always represented by the ``full twist'' element $\Se^{-1} = e_n(J_1^X, \dots, J_n^X), \tr_n^{X,n} = \tr_{\Se^{-1}}.$ This  recovers the symmetry of the HOMFLY-PT polynomial observed by K\'{a}lm\'{a}n \cite{kalmanMeridianTwistingClosed2009}, that was categorified in \cite{gorskySerreDualityKhovanov2019} in type $A$ and in \cite{bezrukavnikovMonodromicModelKhovanov2022} in general type.
\begin{remark}
  The statement analogous to Theorem \ref{sec:geom-mark-trac-1} was lifted to the level of Khavanov-Rozansky homology in \cite{bezrukavnikovMonodromicModelKhovanov2022}. Namely, there it is shown that the object representing the $k$th degree of the invariant has a filtration by the objects representing products of Jucys-Murphy elements. For an arbitrary Weyl group, the object representing the $k$th degree was constructed in \cite{bezrukavnikovMonodromicModelKhovanov2022} in terms of some explicit character sheaves on a reductive group $G$ with the Weyl group $W$. It was also explained how these character sheaves correspond to exterior powers of the reflection representation of $W$. We expect that, if the analogous filtration categorifying expressions in Theorem \ref{sec:geom-mark-trac-1} can be constructed in types $B$ and $D$, the arguments in this paper can be lifted to the level of Khovanov-Rozansky homology: the pairing $\langle, \rangle$ is categorified to the $\mathrm{Hom}$-pairing on the Hecke category; the ``Serre property'' of Proposition \ref{sec:full-twist-jucys-1} has a categorical analogue in the description of the Serre functor of the Hecke category \cite{beilinsonTiltingExercises2004}, \cite{gorskySerreDualityKhovanov2019}; elements $j_n^X$ can be expressed using the braid group action on the Hecke category. Such an extension would, for example, give a geometric interpretation to the special properties of Markov traces in types $B_n$ and $D_n$ involving the elements $T_n, U_n$. We hope to address this in a future work.

  We would also like to  remark that, in type $A$, the fact that Jucys-Murphy elements and their symmetric polynomials come from the extended affine Hecke algebra has an interpretation in terms of geometric representation theory, see \cite{gnr}, \cite{oblomkovAFFINEBRAIDGROUP2019} and \cite{tolmachovkostiantynLinearStructureFinite}. We do not know if this interpretation can be extended to other types. 
\end{remark}

\bibliography{bibliography}

\begin{thebibliography}{GHMN19}

\bibitem[BBM04]{beilinsonTiltingExercises2004}
Aleksandr Beilinson, Roman Bezrukavnikov, and Ivan Mirkovi{\'c}.
\newblock Tilting exercises.
\newblock {\em Moscow Mathematical Journal}, 4(3):547--557, 2004.

\bibitem[BT22]{bezrukavnikovMonodromicModelKhovanov2022}
Roman Bezrukavnikov and Kostiantyn Tolmachov.
\newblock Monodromic model for {{Khovanov}}--{{Rozansky}} homology.
\newblock {\em Journal f{\"u}r die reine und angewandte Mathematik (Crelles
  Journal)}, 2022(787):79--124, June 2022.

\bibitem[Gec98]{geckTraceFunctionsIwahoriHecke1998}
Meinolf Geck.
\newblock Trace functions on {{Iwahori-Hecke}} algebras.
\newblock {\em Banach Center Publications}, 42(1):87--109, 1998.

\bibitem[GHMN19]{gorskySerreDualityKhovanov2019}
Eugene Gorsky, Matthew Hogancamp, Anton Mellit, and Keita Nakagane.
\newblock Serre duality for {{Khovanov}}--{{Rozansky}} homology.
\newblock {\em Selecta Mathematica}, 25(5):79, 2019.

\bibitem[GL97]{geckMarkovTracesKnot2004}
Meinolf Geck and Sofia Lambropoulou.
\newblock Markov traces and knot invariants related to {I}wahori-{H}ecke
  algebras of type {B}.
\newblock {\em Journal f{\"u}r die reine und angewandte {M}athematik},
  485:191--214, 1997.

\bibitem[GNR16]{gnr}
Eugene Gorsky, Andrei Negu{\c t}, and Jacob Rasmussen.
\newblock Flag {{Hilbert}} schemes, colored projectors and
  {{Khovanov-Rozansky}} homology.
\newblock {\em arXiv preprint arXiv:1608.07308}, 2016.

\bibitem[Gom06]{gomi2006markov}
Yasushi Gomi.
\newblock The markov traces and the fourier transforms.
\newblock {\em Journal of Algebra}, 303(2):566--591, 2006.

\bibitem[GP00]{geck2000characters}
Meinolf Geck and G{\"o}tz Pfeiffer.
\newblock {\em Characters of finite {C}oxeter groups and {I}wahori-{H}ecke
  algebras}.
\newblock Number~21. Oxford University Press, 2000.

\bibitem[HL25]{ho2025relative}
Quoc~P. Ho and Penghui Li.
\newblock Relative serre duality for hecke categories.
\newblock {\em arXiv preprint arXiv:2504.12798}, 2025.

\bibitem[Jon87]{jonesHeckeAlgebraRepresentations1987}
Vaughan F.~R. Jones.
\newblock Hecke algebra representations of braid groups and link polynomials.
\newblock In {\em New {{Developments In The Theory Of Knots}}}, pages 20--73.
  World Scientific, 1987.

\bibitem[K{\'a}l09]{kalmanMeridianTwistingClosed2009}
Tam{\'a}s K{\'a}lm{\'a}n.
\newblock Meridian twisting of closed braids and the {{Homfly}} polynomial.
\newblock {\em Mathematical Proceedings of the Cambridge Philosophical
  Society}, 146(03):649, May 2009.

\bibitem[Kho07]{khovanovTriplygradedLinkHomology2007}
Mikhail Khovanov.
\newblock Triply-graded link homology and {{Hochschild}} homology of
  {{Soergel}} bimodules.
\newblock {\em International Journal of Mathematics}, 18(08):869--885, 2007.

\bibitem[Lam94]{lambropoulou1994solid}
Sofia Lambropoulou.
\newblock Solid torus links and hecke algebras of b-type.
\newblock In {\em Proceedings of the Conference on Quantum Topology (Manhattan,
  KS, 1993)}, pages 225--245. World Sci. Publ. Co., Inc. River Edge, NJ, 1994.

\bibitem[Lus83]{lusztigSingularitiesCharacterFormulas1983}
George Lusztig.
\newblock Singularities, character formulas, and a q-analog of weight
  multiplicities.
\newblock In {\em Analyse et Topologie Sur Les Espaces Singuliers. {{CIRM}}, 6
  - 10 Juillet 1981. ({{Actes}} Du {{Colloque}} de {{Luminy}} 1981). {{II}} -
  {{III}}}. 1983.

\bibitem[OR19]{oblomkovAFFINEBRAIDGROUP2019}
Alexei Oblomkov and Lev Rozansky.
\newblock {{Affine braid group}}, {{{JM} elements and knot homology}}.
\newblock {\em Transformation Groups}, 24(2):531--544, June 2019.

\bibitem[Ram97]{ram1997seminormal}
Arun Ram.
\newblock Seminormal representations of weyl groups and iwahori-hecke algebras.
\newblock {\em Proceedings of the London Mathematical Society}, 75(1):99--133,
  1997.

\bibitem[RR03]{Ram2003}
Arun Ram and Jacqui Ramagge.
\newblock Affine {H}ecke {A}lgebras, {C}yclotomic {H}ecke {A}lgebras and
  {C}lifford {T}heory.
\newblock In {\em A Tribute to C. S. Seshadri: Perspectives in Geometry and
  Representation Theory}, pages 428--466, Gurgaon, 2003. Hindustan Book Agency.

\bibitem[{Sag}25]{sagemath}
{Sage Developers}.
\newblock {\em {S}ageMath, the {S}age {M}athematics {S}oftware {S}ystem
  ({V}ersion 10.6)}, 2025.
\newblock {\tt https://www.sagemath.org}.

\bibitem[Tol24]{tolmachovkostiantynLinearStructureFinite}
Kostiantyn Tolmachov.
\newblock Linear structure on a finite {H}ecke category in type {A}.
\newblock {\em arXiv preprint arXiv:2403.19734}, 2024.

\bibitem[WW11]{websterGeometryMarkovTraces2009}
Ben Webster and Geordie Williamson.
\newblock {The geometry of Markov traces}.
\newblock {\em Duke Mathematical Journal}, 160(2):401 -- 419, 2011.

\end{thebibliography}
\bibliographystyle{alpha}

\Addresses

\end{document}